\documentclass[12pt]{article}
\usepackage[margin=2cm]{geometry}
\usepackage{amsmath,amsthm,amsfonts,amssymb}
\usepackage{enumitem}
\usepackage[mathscr]{euscript}
\usepackage{amsmath}
\usepackage{array,float}

\usepackage{graphicx,latexsym}
\usepackage{lscape}
\usepackage{multicol}
\usepackage{multirow}
\usepackage{mathrsfs}
\usepackage{graphicx}
\usepackage{lipsum}
\usepackage{subfig}
\usepackage{lscape}
\usepackage{xcolor}
\usepackage{autobreak}
\allowdisplaybreaks
\usepackage{cite}

\newtheorem{thm}{Theorem}

\newtheorem{cor}{Corollary}

\newtheorem{discu}{Discussion:}

\newtheorem{conje}{Conjecture:}

\begin{document}
	\date{}
	\title{\textbf{Wiener and Average Distance of Irregular Square-Cell Configuration}}
	\author{\begin{tabular}{ccc}
				\textbf{S. Prabhu$^{\text 1}$, Sandi Klav\v{z}ar$^{\text{2}}$, M. Anitha$^{\text 1}$,}\\ \textbf{ M. Arulperumjothi$^{\text 3}$, Paul Manuel$^{\text 4}$ }
		\end{tabular}\\
		\begin{tabular}{c}
\small $^{\text 1}$Department of Mathematics, Rajalakshmi Engineering College, Thandalam, Chennai 602105, India \\
\small $^{\text{2}}$Faculty of Mathematics and Physics, University of Ljubljana, Slovenia\\
\small $^{\text 3}$Department of  Mathematics, St. Joseph's College of Engineering, Chennai 600119, India\\ 
\small$^{\text 4}$Department of Information Science, College of Life Sciences, Kuwait University, Kuwait\\
\small *Corresponding author: drsavariprabhu@gmail.com
\end{tabular}}
\maketitle
\vspace{-0.5 cm}
\begin{abstract}
			A subgraph of the square lattice with all of its inner faces being 4-cycles is called a square-cell configuration. Prior work has provided explicit expressions for the total and average distances between vertex pairs in symmetric square-cell configurations, including well-structured families such as hexagonal square-cell configurations $H(n)$, trapezium square-cell configurations $T(n,k)$, and bitrapezium square-cell configurations $BT(n,k_1,k_2)$. In this article, we further extend the square-cell configuration from regular boundaries to irregular boundaries, which do not exhibit complete regularity or symmetry in their structure. We find the generalized expressions for the Wiener index and average distance of such irregular configurations, incorporating combinatorial and structural variations. Our results demonstrate how irregularity affects the growth and distribution of pairwise distances and provide a unifying framework that includes both symmetric and asymmetric square-cell graphs as exceptional cases. This generalization provides novel insights into the structural behaviour of square-cell frameworks characterized by complex or perturbed geometries.
		\end{abstract}
\textbf{Keywords:} 
	Irregular square-cell configuration; irregular square lattice; distance; Wiener index;\\
	 average distance 

\medskip\noindent
\textbf{Mathematics Subject Classification (2020):} 05C12 $\cdot$ 05C76
	
\section{Introduction}

Square-cell configurations are subgraphs of the infinite square lattice that, when embedded in the plane, have the defining property that all inner faces form 4-cycles \cite{GuKlRa00}. These configurations serve as fundamental models in discrete geometry, mathematical chemistry, and theoretical computer science, where they are used to represent molecular grids, communication networks, and planar tilings consisting of vertices and edges (respectively denoted by $V(G)$ and $E(G)$ with respect to the graph $G$). Prior research has extensively analyzed symmetric square-cell configurations such as rectangular blocks, hexagonal grids, trapezium shapes, and bitrapezium forms, yielding closed-form expressions for structural metrics such as the Wiener index and average distance. 

The Wiener index $W(G)$ of a graph $G$ is a well-known topological index (a distance-based index) introduced by Harold Wiener in 1947 to study boiling points of alkanes \cite{Wi47}. The distance (length of a graph geodesic) between the pair of nodes in $G$ is the minimum number of edges among all paths that connect the pair. The sum of the length of a graph geodesic between all pairs (unordered) of nodes in $G$ is known as the Wiener index:
$$W(G) = \frac{1}{2} \sum_{u, v \in V(G)} d_G(u, v)\,,$$
where $d_G(u,v)$ (or $d(u,v)$) is the distance from $u$ to $v$ in $G$. The average distance $\mu(G)$ is obtained by normalizing the Wiener index over the total number of vertex pairs:
\[
\mu(G) = \frac{2W(G)}{n(n-1)},
\]
where $n = |V(G)|$ is the order of the graph \cite{Da93,DoGr77}. As usual, $\delta(G)$ and $\Delta(G)$ respectively indicate the minimum and the maximum degree in a graph $G$.

Much of the existing literature has focused on highly regular square-cell structures, for which the symmetry enables elegant and closed-form distance expressions. For example, the Wiener index for rectangular grids, hexagonal square-cell configuration $H(n)$, trapezium configuration $T(n,k)$, and bitrapezium configuration $BT(n,k_1,k_2)$ have been derived using combinatorial and algebraic techniques. These results have applications in chemical graph theory, particularly in modeling lattice-like molecular compounds such as benzenoid hydrocarbons. However, many real-world networks and molecular structures do not conform to such idealized symmetry. Deformations, edge removals, and growth constraints often yield configurations with holes, jagged boundaries, and irregular connectivity patterns. The symmetry of the square-cell configurations motivates us to study irregular square-cell configuration, which are connected subgraphs of the square lattice and have four cycles for each inner face. Still, the overall layout is asymmetric, non-rectangular, or incomplete. In this article, we extend previous theoretical frameworks by analyzing distance-based indices in an irregular square-cell configuration. We formally define this class of configurations, characterized by topological variations while maintaining the 4-cyclic inner face structure, and derive generalized expressions for $W(G)$ and $\mu(G)$ that account for asymmetries, holes, and vertex degree variations.

Furthermore, we examine how structural irregularities influence the growth and distribution of distances, providing comparative analyses with symmetric square-cell models, and propose a unifying framework that incorporates both symmetric and irregular square-cell graphs as exceptional cases. In chemical graph theory, square, hexagonal and triangular-like configurations were constructed and studied vigorously to analyze their topological properties, such as atom-bond connectivity, molecular properties in terms of their structures, and dual nature properties in their structural activities. Convex triangular networks are a well-studied triangular-cell configuration that has been extended to irregular convex triangular networks. These networks were used in finding their distance-based parameters by uniquely identifying nodes, which has applications in chemistry, network discovery, and pattern verification \cite{PrJeAr24}. 

Similarly, convex benzenoid systems are one of the hexagonal-cell configurations studied for analyzing aromatic compounds consisting of benzene-like rings, which are efficient in increasing the aroma of chemical compounds. These networks are extended to irregular convex benzenoid systems for computing certain counting polynomials \cite{GaMuPr21} and distance and degree-based indices \cite{ArClBa16}, which have broad applications in the study of fluid chemistry, viscosity, molecular physics and quantitative structural studies. In a few articles, the convex benzenoid systems resemble the structure of the honeycomb network and specific graph parameters are derived along with their applications. The readers could refer to Figure~\ref{fig:1} for  examples of grid, honeycomb and convex triangular networks. 

\begin{figure}[ht!]
	\centering 	
	\subfloat[]{\includegraphics[scale=1]{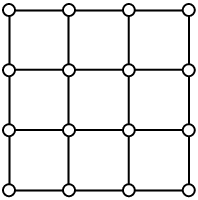}}
	\quad \quad 
	\subfloat[]{\includegraphics[scale=1]{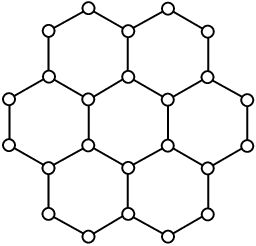}}
	\quad \quad 
	\subfloat[]{\includegraphics[scale=1]{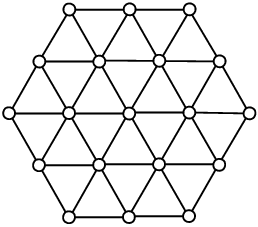}}
	\caption{ (a) Grid network; (b) Honeycomb network; (c) Convex triangular network}
	\label{fig:1}
\end{figure}

This study motivates us to choose the square-cell configuration and extend its structural properties to irregular shapes for computing distance-based parameters, such as the Wiener index and average distances. Previous works on average distances in a square-cell configuration computed by Klav\v{z}ar et al. \cite{GuKlRa00} contain some typographical inaccuracies, most notably that the average distance of the hexagonal square-cell configuration was wrongly derived. In the present work, we first correct these inaccuracies and then extend the study by computing the average distance in an irregular square-cell configuration. 

The article comprises six major sections. Section \ref{s1} promotes the concept of average distance along with its literature. Section \ref{s2} discusses the idea of the Wiener index and the average distance via the cut method, and Section \ref{s3} introduces the structure of generalized irregular square-cell configuration along with its properties. Section \ref{s4} presents the results for the networks introduced in Section \ref{s3}, and Section \ref{s5} concludes by discussing applications and future directions.

\section{Average Distances in Graphs} \label{s1}

Average distance is an essential property in chemical graph theory, which establishes the mean distances in a molecule comprising atoms and bonds. Several works on average distance in graphs were produced, and a review of the literature is presented in this section. Doyle and Graver in 1977 defined the mean distance (average distance) in a graph and produced the bounds based on the diameter of a graph \cite{DoGr77}. The upper bound was investigated for average distance in \cite{Pl84} through the order of a graph, and the same parameter was investigated for 2-connected and 2-edge connected graphs. The relation between independence number and $\mu(G)$ is given in \cite{Ch88}. The measure of the average distance based on vertex removal is investigated in \cite{Al90}. Average distances were investigated for several graph classes including interval graphs \cite{Da93}, $(n,k)$-star graphs \cite{ChCh98a}, arrangement graphs \cite{ChCh98}, square-cell configurations \cite{GuKlRa00}, colored graph \cite{DaGoSl01}, web graph \cite{BrKuMa00}, weighted graphs \cite{Da12}, Sierpi\'{n}ski gasket \cite{HiSc90}, butterfly and hypertree architectures \cite{KlMaNa16}. Dankelmann extensively studied extremal graph theory to provide sharp upper bounds for the average distances in graphs \cite{Da97}, and Firby and Haviland achieved sharp lower bounds \cite{FiHa97}. The relationship between $\mu(G)$ in a graph was investigated by finding a minimum spanning tree in a graph, and the bound in terms of $\delta$ and girth was produced in \cite{DaEn00, Mu14} for $C_3$ and $C_4$-free graphs. Also, the relationship between $\mu(G)$ and the total number of paths containing an edge with some special properties is derived in \cite{BiGy88}. The paper \cite{DaMuSw09} provides sharp upper bounds on $\mu(G)$ of highly vertex-connected graphs, especially when the connectivity parameter $\kappa$ is an odd integer. In \cite{Da10}, the authors have given the relationship between the average distance based on the $k$-packing number and the $k$-domination number. In 2000, $\mu(G)$ was expressed  in terms of its canonical metric representation. The existing theorems on $\mu(G)$, along with a proof of a conjecture related to independence number, are given in \cite{WuZh14}. The work \cite{XuGyGu15} defines the compactness for clique trees (via average distance), characterizes such trees, and provides a polynomial-time algorithm to construct a minimum average distance clique tree for chordal graphs. Ma et al.\ extended the concept to a minimum $\mu(G)$ in a vertex through the idea of proximity and eccentricity of a graph \cite{MaWuZh12}. For further results relating to average distance and other graph parameters (connectivity, degree, subgraph, independence number), the readers could refer to~\cite{BeOePi02, ChLu02, DaOeWu04, Si09, Ju17, BuPi09}.

\section{Cut Method: A Tool for Calculating Wiener Index and Average Distance} \label{s2}

The cut method is a well-established and effective technique for computing topological indices of large networks. It functions by decomposing the associated graph into smaller fragments and subsequently reconstructing the index of these fragments to determine the properties of the entire structure. The principal advantage of this method lies in its ability to compute the topological index of families of chemical graphs without the need for exhaustive, brute-force calculations of distances between all pairs of vertices. The (standard) cut method is applied to molecular graphs that belong to the class of partial cubes.

For dimension $n \geq 1$, the hypercube $Q_n$ is defined as the Cartesian product of $n$ copies of the complete graph $K_2$. The vertex set of $Q_n$ is equivalently made up of all binary strings of length $n$, where two vertices are adjacent if and only if the associated strings differ by precisely one place. The subgraph $H$ of $G$ is considered to be convex if every shortest path between any two vertices in $H$ lies inside $H$. A subgraph $H$ of a graph $G$ is an isometric subgraph if 
\[
d_H(c, d) = d_G(c, d)
\]
holds for all $c, d \in V(H)$. Every convex subgraph is obviously isometric, the reverse is not necessarily true. Partial cubes are, by definition, isometric subgraphs of hypercubes. In the theory of partial cubes, the Djokovi\'{c}-Winkler relation $\Theta$~\cite{Wi84, Dj73} plays an essential role because the cut method is an edge-cut method that decomposes a chemical network using $\Theta$. This relation is defined for any two edges $a = wx$ and $b = yz$ of a graph $G$ by saying that $a\Theta b$ when the following holds:
\[
d_G(w,y) + d_G(x,z) \neq d_G(w,z) + d_G(x,y).
\]
The transitive closure $\Theta^{\ast}$ of the relation $\Theta$ is an equivalence relation on $E(G)$, the corresponding partition is called the $\Theta^{\ast}$-partition. The standard cut method partitions the edge set of a graph $G$ by a relation into classes $F_1, \ldots, F_k$, which are referred to as cuts, where each graph $ G-F_i$, $1 \leq i \leq k$, has at least two connected components. 

\begin{thm} {\rm \cite{klavzar-1995}}
	Let $F_1, \ldots, F_k$ be the $\Theta$-classes of a partial cube $G$, and let $G_1^i$ and $G_2^i$ be the connected components of $ G-F_i$, $1\le i\le k$. Then  
	\[
	W(G) = \sum_{i=1}^k |V(G_1^i)| \times |V(G_2^i)|.
	\]
\end{thm}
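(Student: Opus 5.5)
We prove the formula by counting, for each pair of vertices, how many $\Theta$-classes separate it, and then exchanging the order of summation. Since $G$ is a partial cube, we fix an isometric embedding $\varphi: V(G)\to V(Q_n)$. We use the standard facts from the theory of partial cubes (Djokovi\'{c}, Winkler): $\Theta$ is already transitive on a partial cube, so the $\Theta$-classes coincide with the $\Theta^{\ast}$-classes. Moreover, the classes $F_1,\ldots,F_k$ correspond bijectively to the coordinates of $Q_n$ that are not constant on $\varphi(V(G))$; in particular we may take $n=k$. Concretely, an edge $wx$ lies in $F_i$ exactly when $\varphi(w)$ and $\varphi(x)$ differ in coordinate $i$. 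The graph $G-F_i$ then has exactly two components, $G_1^i$ and $G_2^i$. Their vertex sets are $\{u : \varphi(u)_i=0\}$ and $\{u:\varphi(u)_i=1\}$. Both sides are connected because these half-spaces $W_{ab}=\{u: d(u,a)<d(u,b)\}$, for any $ab\in F_i$, are convex in a bipartite graph where $\Theta$ is transitive.

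Next we show that a shortest path crosses each class at most once. Let $u,v\in V(G)$ and let $P$ be a shortest $u,v$-path. By isometry, $d_G(u,v)$ equals the Hamming distance between $\varphi(u)$ and $\varphi(v)$. Along $P$, each edge flips exactly one coordinate. The path has length equal to the Hamming distance, so no coordinate can be flipped twice. Therefore $P$ contains exactly one edge from $F_i$ when $u$ and $v$ lie in different components of $G-F_i$, and no edge of $F_i$ otherwise. Writing $\chi_i(u,v)$ for the indicator that $F_i$ separates $u$ and $v$, we get
\[
d_G(u,v)=\sum_{i=1}^k \chi_i(u,v).
\]

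Summing over unordered pairs and exchanging the sums gives
\[
W(G)=\sum_{\{u,v\}}\sum_{i=1}^k\chi_i(u,v)=\sum_{i=1}^k\bigl|\{\{u,v\}: \chi_i(u,v)=1\}\bigr|.
\]
A pair is separated by $F_i$ exactly when one endpoint lies in $G_1^i$ and the other in $G_2^i$. Hence the $i$th term equals $|V(G_1^i)|\cdot|V(G_2^i)|$, which is the claimed formula.

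The main obstacle is the structural input: that in a partial cube the $\Theta$-classes are exactly the coordinate cuts, each splitting $G$ into precisely two convex components. The approach is to cite Winkler's theorem, which characterises partial cubes as the bipartite graphs in which $\Theta$ is transitive. We then identify $F_{ab}$, the class of $ab$, with the set of edges between $W_{ab}$ and $W_{ba}$. Granted this, the remaining steps are the routine double counting above.
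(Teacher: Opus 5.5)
Your proof is correct, and since the paper offers no proof of its own (it only cites \cite{klavzar-1995}), your route is the standard argument behind that result. Via the isometric hypercube embedding, $d_G(u,v)$ equals the number of coordinates, equivalently $\Theta$-classes, separating $u$ and $v$, and exchanging the order of summation yields $\sum_i |V(G_1^i)|\,|V(G_2^i)|$. One minor streamlining: your own geodesic argument already shows that a shortest path between two vertices with equal $i$th coordinate uses no edge of $F_i$, so the connectedness (indeed convexity) of the two sides follows directly, without a separate appeal to the Djokovi\'{c}--Winkler convexity of the sets $W_{ab}$.
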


In this paper, the standard cut method is enough for our purposes, but we point out that the method has been extended to apply to broader classes of graphs beyond partial cubes. The first article to propose a general cut method (for arbitrary graphs) for the Wiener index was~\cite{Kl06}, see the survey paper~\cite{KlNa15} for the developments till 2015. The extended cut method has been later further elaborated, see, for instance,~\cite{brezovnik-2021, brezovnik-2023, tratnik-2017, tratnik-2020}. Let us also note that very recently  an extended cut method was elaborated for the computation of the Wiener index on hypergraphs in~\cite{romih-2025}. 
    
\section{Generalized Irregular Square-Cell Configuration}
\label{s3}

In this section, we introduce irregular square cell-configurations. They are six-sided convex polygons drawn on the infinite square grid. An irregular square-cell configuration will be denoted by $\mathrm{ISC}(p, q, m, n)$. It is a finite, subgraph (connected) of the infinite square lattice embedded in the plane such that the following two conditions hold. 

\begin{enumerate}
	\item Every inner face of $G$ is a 4-cycle (that is, each bounded face is formed by a cycle of four edges).
	\item The shape of $\mathrm{ISC}(p, q, m, n)$ is determined by the parameters $p$, $q$, $m$, and $n$ in the way as presented in Figure~\ref{fig:ISC}. 
\end{enumerate}

\begin{figure}[ht!]
	\centering 	
	\subfloat[]{\includegraphics[scale=.5]{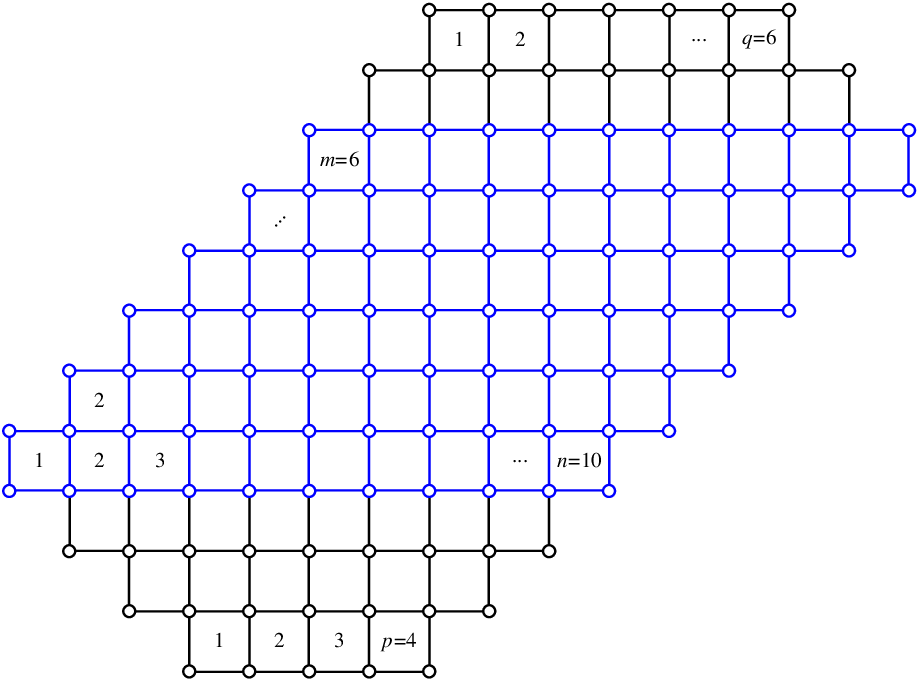}}
	\quad \quad 
	\subfloat[]{\includegraphics[scale=0.5]{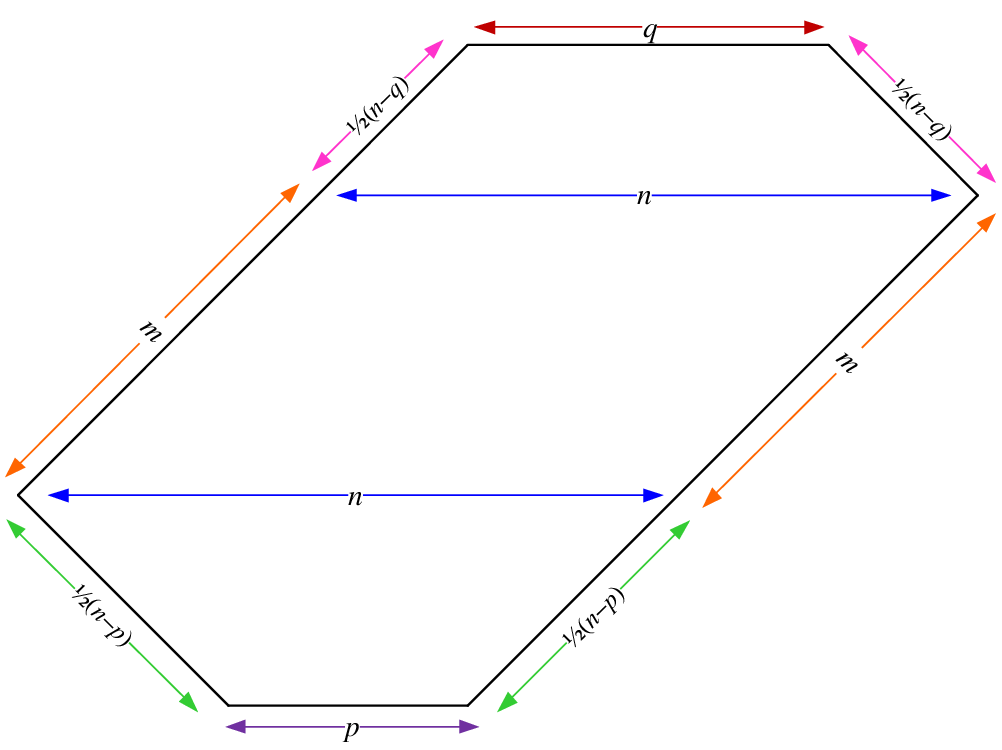}}
	\caption{(a) $\mathrm{ISC}(4, 6, 6, 10)$; (b) $\mathrm{ISC}(p, q, m, n)$}
	\label{fig:ISC}
\end{figure}

That is, in an irregular square-cell configuration $\mathrm{ISC}(p, q, m, n)$, the length of the lower trapezium and the upper trapezium are respectively denoted by $p$ and $q$, the height and the length of the parallelogram are respectively denoted by $m$ and $n$. It consists of 
$\dfrac{1}{4}(2n^2 - p^2 - q^2 + 4mn + 8m + 4n)$ vertices and $\dfrac{1}{2}(2n^2 - p^2 - q^2 + 4mn + 4m + p + q - 2)$
edges. With respect to the parameters $p$, $q$, $m$, and $n$, we need to consider the following three cases:

\begin{enumerate}
	\item $p \leq q-2m+2$,
	\item $p\leq 2m-q-2$,
	\item $p > q-2m+2 $.
\end{enumerate} 

When $q=p$, $m=1$, and $n=3p-2$, the structure $\mathrm{ISC}(p,q,m,n)$ is isomorphic to the hexagonal square-cell configuration. When $q=n$ and $m=1$, the structure $\mathrm{ISC}(p,q,m,n)$ is isomorphic to the trapezium square-cell configuration. When $m=1$, the structure $\mathrm{ISC}(p,q,m,n)$ is isomorphic to the bitrapezium square-cell configurations. The square-cell configurations as shown in Figure~\ref{fig:old-cases} were considered earlier in~\cite{GuKlRa00}. 

\begin{figure}[ht!]
	\centering 	
	\subfloat[]{\includegraphics[scale=0.6]{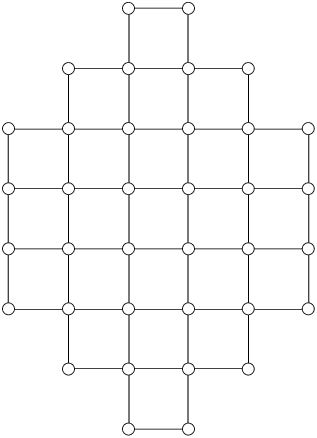}}
	\quad \quad 
	\subfloat[]{\includegraphics[scale=0.6]{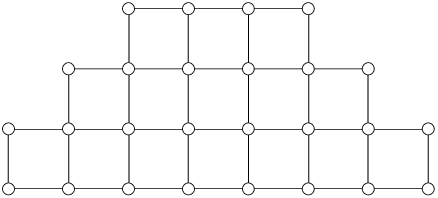}}
	\quad \quad 
	\subfloat[]{\includegraphics[scale=0.6]{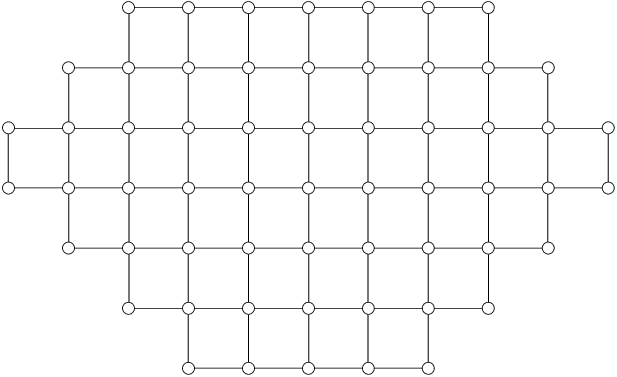}}
	\caption{ (a) Hexagonal square-cell configuration $H(p)$; (b) Trapezium square-cell configuration $T(n, p)$; (c) Bitrapezium square-cell configuration $BT(n, p, q)$.}
	\label{fig:old-cases}
\end{figure}

\section{Main Results} \label{s4}

The number of vertices in the components of horizontal and vertical cuts of all three cases is given in Tables~\ref{t1}, \ref{t2}, \ref{t3}, and \ref{t4}.

\begin{table}[ht!]
	\centering
	\caption{Number of vertices in the horizontal cuts}
			
	\vspace{0.6 cm}\label{t1}
	\scalebox{1}{
		\begin{tabular}{|l|l|}
			\hline
			\multicolumn{1}{|c|}{\textbf{Horizontal Cuts}}                   & \multicolumn{1}{c|}{\textbf{Number of Vertices}}  \\                                            \hline
			\begin{tabular}[c]{@{}l@{}}	$H_{1k}$: \\$1\leq k\leq \frac{n-p}{2}$ 
			\end{tabular}     & \begin{tabular}[c]{@{}l@{}}$f_{1k}= pk+k^2$\\ $f_{2k}=|V|-f_{1k}$  \end{tabular}                           \\ \hline
			\begin{tabular}[c]{@{}l@{}}	$H_{2k}$:  \\ $1\leq   k\leq m$    \\ \end{tabular}     & \begin{tabular}[c]{@{}l@{}}$f_{3k}$ = $  \frac{1}{4}(n^2-p^2+4nk+8k-4)$\\ $f_{4k}=   |V|-f_{3k}$ \end{tabular}                       \\ \hline
			\begin{tabular}[c]{@{}l@{}}	$H_{3k}$: \\ $1\leq k\leq \frac{n-q}{2} $\\ \end{tabular}       & \begin{tabular}[c]{@{}l@{}}$f_{5k}$ =$ \frac{1}{4}(n^2-p^2+4mn+8m+4nk+8k-4k^2-4)$\\ $f_{6k}= |V|-f_{5k}$        
			\end{tabular} 
			\\ \hline
		\end{tabular}     
			}
\end{table}

\begin{thm}
For $m \geq 1$, $p \leq q \leq n $, and $p \leq q-2m+2 $, \\ $W(\mathrm{ISC}(p, q, m, n))$ = $\frac{1}{960}\Big(
			-32 m^5 + 80 m^4 q + 160 m^4 + 320 m^3 n^2 + 1280 m^3 n 
			- 80 m^3 p^2 - 80 m^3 q^2 - 320 m^3 q + 1120 m^3 
			+ 480 m^2 n^3 + 1920 m^2 n^2 - 240 m^2 n p^2 - 240 m^2 n q^2 
			+ 1920 m^2 n + 120 m^2 p^2 q - 240 m^2 p^2 + 40 m^2 q^3 
			- 240 m^2 q^2 + 240 m^2 q - 160 m^2 
			+ 280 m n^4 + 1280 m n^3 - 240 m n^2 p^2 - 240 m n^2 q^2 
			+ 1680 m n^2 + 80 m n p^3 - 480 m n p^2 - 80 m n p 
			+ 80 m n q^3 - 480 m n q^2 - 80 m n q + 320 m n 
			- 10 m p^4 + 160 m p^3 + 60 m p^2 q^2 - 240 m p^2 q 
			- 120 m p^2 - 160 m p - 10 m q^4 + 80 m q^3 - 120 m q^2 
			- 128 m 
			+ 56 n^5 + 280 n^4 - 80 n^3 p^2 - 80 n^3 q^2 + 400 n^3 
			+ 40 n^2 p^3 - 240 n^2 p^2 - 40 n^2 p + 40 n^2 q^3 
			- 240 n^2 q^2 - 40 n^2 q + 80 n^2 
			- 10 n p^4 + 80 n p^3 + 60 n p^2 q^2 - 120 n p^2 - 80 n p 
			- 10 n q^4 + 80 n q^3 - 120 n q^2 - 80 n q - 96 n 
			+ 4 p^5 + 5 p^4 q - 10 p^4 - 20 p^3 q^2 - 20 p^3 
			- 10 p^2 q^3 + 60 p^2 q^2 + 80 p^2 q + 40 p^2 
			+ 20 p q^2 + 16 p 
			+ 5 q^5 - 10 q^4 + 40 q^2 - 80 q\Big)$.
\end{thm}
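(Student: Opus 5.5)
We will prove the formula with the cut method of \cite{klavzar-1995}. The first step is to check that $G=\mathrm{ISC}(p,q,m,n)$ is a partial cube. $G$ is a subgraph of the square lattice bounded by a six-sided convex polygon whose sides are horizontal or at $45^\circ$. Each horizontal line through the midpoints of vertical edges, and each vertical line through the midpoints of horizontal edges, crosses $G$ in a segment, so it cuts $G$ into exactly two connected pieces. Moreover, every shortest path in the lattice between two vertices of $G$ can be rerouted inside $G$ by convexity; in particular a monotone staircase path always exists. Hence $d_G$ coincides with the $\ell_1$ distance, and the $\Theta$-classes are exactly these horizontal and vertical elementary cuts. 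This shows that $G$ is a partial cube and that the theorem of \cite{klavzar-1995} applies. We obtain
\[
W(G)=\sum_{\text{horizontal cuts }F}|V(G_1^F)|\,|V(G_2^F)|+\sum_{\text{vertical cuts }F}|V(G_1^F)|\,|V(G_2^F)|.
\]

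\textbf{Horizontal cuts.} These are read off Table~\ref{t1}. The cuts come in three families:
\begin{itemize}
\item $H_{1k}$, $1\le k\le \frac{n-p}{2}$, in the lower trapezium;
\item $H_{2k}$, $1\le k\le m$, in the parallelogram;
\item $H_{3k}$, $1\le k\le \frac{n-q}{2}$, in the upper trapezium.
\end{itemize}
The corresponding smaller sides are $f_{1k},f_{3k},f_{5k}$, and the other sides equal $|V|-f_{\cdot k}$, with $|V|=\frac14(2n^2-p^2-q^2+4mn+8m+4n)$. Each sum $\sum_k f(k)(|V|-f(k))$ has a summand that is a polynomial of degree at most $4$ in $k$. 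We evaluate it with Faulhaber's formulas $\sum k^j$ for $j\le 4$, substituting the upper limits $\frac{n-p}{2}$, $m$ and $\frac{n-q}{2}$.

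\textbf{Vertical cuts.} This is where the hypothesis $p\le q-2m+2$ enters. Moving from left to right, each column's vertex count changes as three boundary constraints switch on and off:
\begin{itemize}
\item the lower trapezium, whose width grows from $p$;
\item the slanted sides of the parallelogram, of height $m$;
\item the upper trapezium, which starts from width $q$.
\end{itemize}
The condition $p\le q-2m+2$ fixes the order in which the left endpoints of these three pieces occur, which makes the column profile a fixed piecewise-linear function. We would use the vertical-cut rows of Tables~\ref{t2}--\ref{t4} for this case, listing the ranges of $k$ and the left-component sizes $g_{jk}$, each quadratic in $k$. By the left--right symmetry of the cut structure, it should suffice to sum over the cuts up to the middle and account separately for the central range. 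Each range then again gives a sum of degree-$4$ polynomials in $k$, handled as above.

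\textbf{Main obstacle.} The conceptual content is mild; the difficulty is bookkeeping. One must get the vertical-cut ranges and the sizes $g_{jk}$ exactly right at the breakpoints, including off-by-one effects from parity ($n-p$ and $n-q$ even) and from the $+2$ in the case condition. The resulting quintic in four variables must then be expanded correctly. We would do the final summation and simplification symbolically and validate it in two ways:
\begin{itemize}
\item compute $W$ by BFS for small admissible parameters, e.g.\ $\mathrm{ISC}(4,6,2,10)$ and $\mathrm{ISC}(2,4,1,6)$, and compare with the formula;
\item check that the special case $m=1$ reproduces the bitrapezium formula of \cite{GuKlRa00}, bearing in mind the corrections mentioned in the introduction.
\end{itemize}
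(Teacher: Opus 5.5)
Your overall route is the paper's. You apply the cut method with the horizontal families $H_{1k},H_{2k},H_{3k}$ of Table~\ref{t1} and the five vertical families $V_{1k},\dots,V_{5k}$ of Table~\ref{t2}, then evaluate the eight sums $\sum_k f_{2i-1,k}f_{2i,k}$ in closed form. Only Table~\ref{t2} applies here; Tables~\ref{t3} and~\ref{t4} belong to the other two cases and have different breakpoints. Your partial-cube justification goes beyond the paper, which takes it for granted. What makes it work is that every row and every column of cells is an interval; the staircase boundary itself is not convex.

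The step that would fail is your shortcut via left--right symmetry of the vertical cuts. For $m\ge 2$ there is no such symmetry, because the middle parallelogram leans to one side. On the left the column sizes run $2,4,\dots,2(m+\frac{n-q}{2})$, rising by $2$ over $m+\frac{n-q}{2}$ columns. On the right they fall by $2$ over only $\frac{n-q}{2}+1$ columns. Table~\ref{t2} shows this directly:
\begin{itemize}
\item $V_{1k}$ runs over $\frac{2m+n-q-2}{2}$ cuts, but $V_{5k}$ over only $\frac{n-q}{2}$;
\item $V_{2k}$ runs over $\frac{q-p-2m+2}{2}$ cuts, but $V_{4k}$ over $\frac{q-p+2m-2}{2}$.
\end{itemize}
These blocks match only when $m=1$, i.e.\ the bitrapezium. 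For example, $\mathrm{ISC}(1,3,2,3)$ has vertex columns of sizes $2,4,4,3,2$. Its vertical-cut contributions are $2\cdot 13$, $6\cdot 9$, $10\cdot 5$ and $13\cdot 2$, which are not palindromic, so reflecting half of them gives the wrong total.

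Drop the shortcut and sum all five vertical families over their own ranges, as the paper does. With that change your computation coincides with the paper's. Your validation plan is sound: the stated polynomial agrees with direct cut counts in the following cases.
\begin{itemize}
\item $W(\mathrm{ISC}(1,3,2,3))=280$.
\item $W(\mathrm{ISC}(1,1,1,3))=152$ (the plus pentomino).
\item For $m=1$ and $p=q=n$ (the ladder), $W=(n+1)^2+\frac{2n(n+1)(n+2)}{3}$.
\end{itemize}
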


\begin{proof}
When $m \geq 1$, $p \leq q \leq n $ and $p \leq q-2m+2$, the Wiener index of $\mathrm{ISC}(p, q, m, n)$ can be expressed as follows: 
\begin{align*}
		W(\mathrm{ISC}(p, q, m, n))&= \sum_{k=1}^{\frac{n-p}{2}}
		
		f_{1k}f_{2k} +\sum_{k=1}^{m}
		
		f_{3k}f_{4k}+\sum_{k=1}^{\frac{n-q}{2}}
		
		f_{5k}f_{6k}+\sum_{k=1}^{\frac{2m+n-q-2}{2}}
		
		f_{7k}f_{8k}+\sum_{k=1}^{\frac{q-p-2m+2}{2}}
		
		f_{9k}f_{10k} \\ & \quad +\sum_{k=1}^{p}f_{11k}f_{12k} +\sum_{k=1}^{\frac{q-p+2m-2}{2}}f_{13k}f_{14k} +\sum_{k=1}^{\frac{n-q}{2}}f_{15k}f_{16k}.&
	\end{align*}
	By consulting Figures \ref{fig:5} and \ref{fig:2} and after performing some tedious computations, the formula is derived.	
\end{proof}

\begin{table}[H]
	\centering
	\caption{Number of vertices in the component values for $p \leq q-2m+2 $}\vspace{0.6cm}\label{t2}
	\scalebox{0.8}{
		\begin{tabular}{|l|l|}
			\hline
			\multicolumn{1}{|c|}{\textbf{Vertical Cuts}}                   & \multicolumn{1}{c|}{\textbf{Number of vertices}}  \\                                            \hline
			
			\begin{tabular}[c]{@{}l@{}}	$V_{1k}$:\\ $1\leq k\leq \frac{2m+n-q-2}{2}$  \\ \end{tabular}      & \begin{tabular}[c]{@{}l@{}}$f_{7k} = k^2+k$\\ $f_{8k}= |V|-f_{7k}$  \end{tabular} \\ \hline
			\begin{tabular}[c]{@{}l@{}}	$V_{2k}$:\\ $1\leq k\leq \frac{q-p-2m+2}{2} $ \\ \end{tabular}      & \begin{tabular}[c]{@{}l@{}}$f_{9k} = \frac {1}{4}(4m^2+n^2+q^2+4mn-4mq-4m-2nq-2n+2q+8mk+4nk-4qk+2k^2-2k)$\\ $f_{10k}= |V|-f_{9k}$ \end{tabular}                        \\ \hline
			\begin{tabular}[c]{@{}l@{}}	$V_{3k}$:\\ $1\leq k\leq p $\end{tabular} & \begin{tabular}[c]{@{}l@{}}$f_{11k} = \frac {1}{8}(2n^2-4m^2+p^2-q^2-4mp+4mq-4np+2pq-2p-2q+4m$\\$+4n+8k+8mk+8nk-4pk-4qk)$\\ $f_{12k}= |V|-f_{11k}$       \end{tabular}     \\ \hline
			\begin{tabular}[c]{@{}l@{}}	$V_{4k}$:\\
				$1\leq k\leq \frac{q-p+2m-2}{2} $ \end{tabular} & \begin{tabular}[c]{@{}l@{}}$f_{13k}$ = $\frac {1}{8}(2n^2-4m^2-3p^2-q^2+4mp+4np+4mq-2pq+6p-2q+4m+4n+8mk+8nk$\\$-4pk-4qk+12k-4k^2)$\\ $f_{14k}= |V|-f_{13k}$ \end{tabular}     \\ \hline
			\begin{tabular}[c]{@{}l@{}} $V_{5k}$: \\ $1\leq k\leq \frac{n-q}{2}$\end{tabular}        & \begin{tabular}[c]{@{}l@{}}$f_{15k} = \frac {1}{4}(8m-2n+6q+4mn+2nq+n^2-p^2-2q^2-8+4nk-4qk+12k-4k^2)$\\ $f_{16k}= |V|-f_{15k}$ \\            \end{tabular}               \\ \hline
	\end{tabular}}
\end{table}
		
\begin{figure}[ht!]
			\centering 	
			\subfloat[]{\includegraphics[scale=0.5]{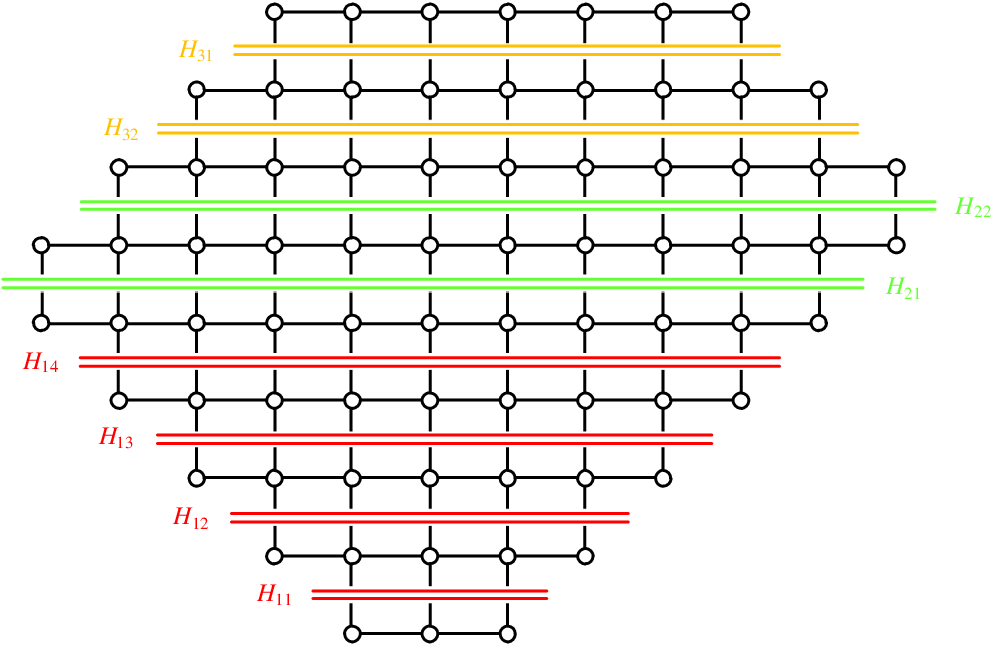}}
			\quad \quad \quad \quad
			\subfloat[]{\includegraphics[scale=0.5]{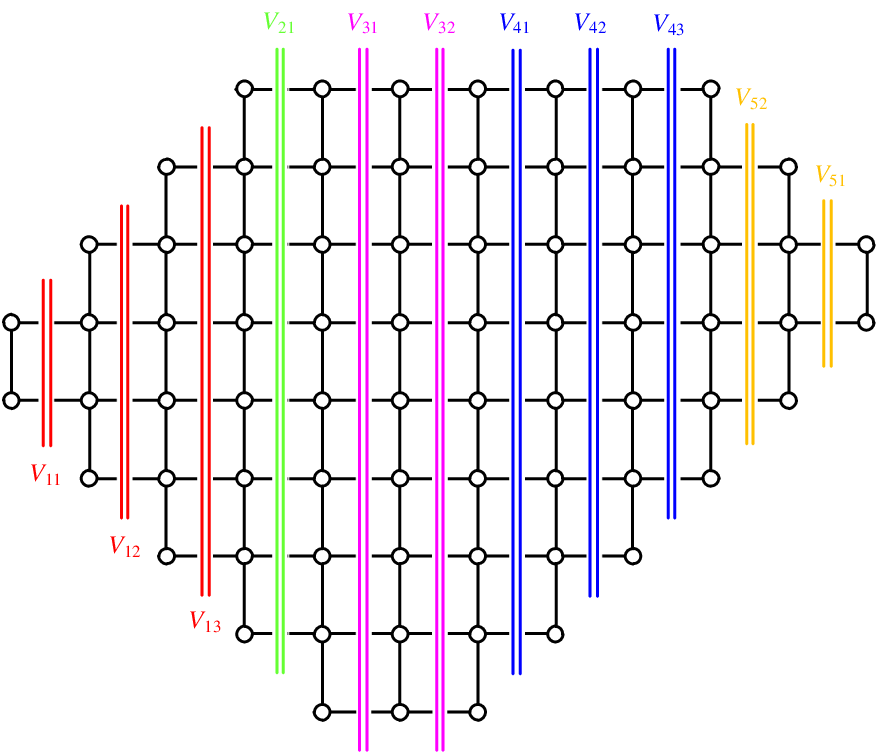}}
			\caption{$p \leq q-2m+2 $ 
				(a) Horizontal cuts; (b) Vertical cuts }
			\label{fig:5}
\end{figure}

\begin{figure}[ht!]
	\centering 	
	\subfloat[]{\includegraphics[scale=.5]{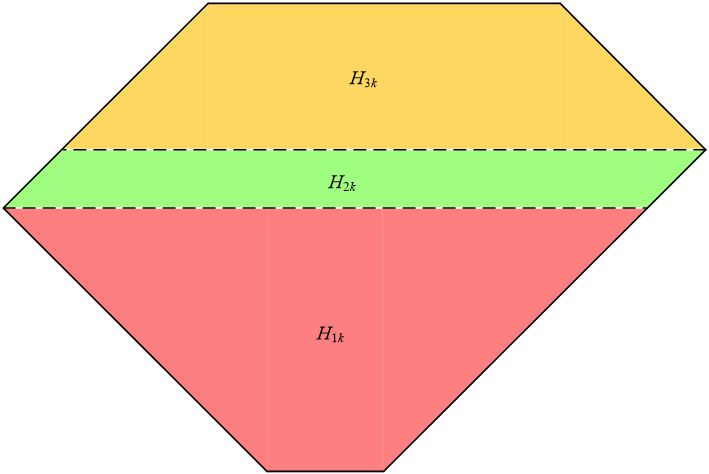}}
	\quad \quad 
	\subfloat[]{\includegraphics[scale=0.5]{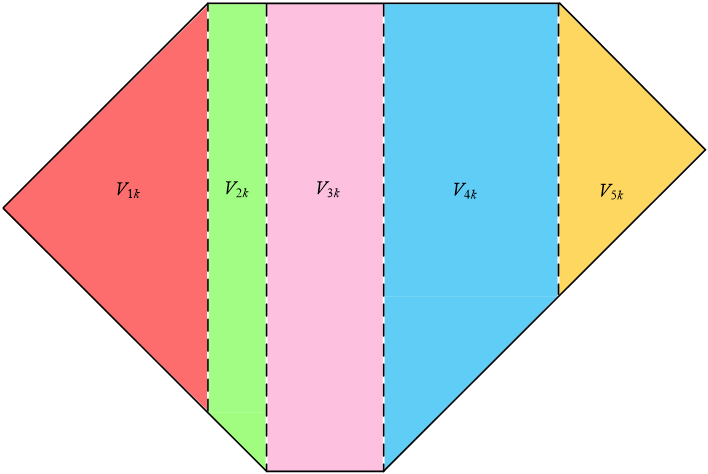}}
	\caption{$p \leq q-2m+2 $ (a) $H_{1k}$: $1\leq k\leq \frac{n-p}{2}, $ $H_{2k}$: $1\leq   k\leq m$, $H_{3k}$: $1\leq k\leq \frac{n-q}{2}$; (b) $V_{1k}$: $1\leq k\leq \frac{2m+n-q-2}{2}$, $V_{2k}$: $1\leq k\leq \frac{q-p-2m+2}{2} $, $V_{3k}$: $1\leq k\leq p $, $V_{4k}$:
		$1\leq k\leq \frac{q-p+2m-2}{2}$,  $V_{5k}$:  $1\leq k\leq \frac{n-q}{2}$.}
	\label{fig:2}
\end{figure}
	
\begin{cor}{\rm\cite{GuKlRa00}}
If $p \geq 2$, then 
$W(H(p))$ =
$\frac{1}{15}(158p^5 - 35p^3 - 3p)$.	
\end{cor}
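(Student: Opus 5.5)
The plan is to obtain the corollary by specializing the preceding theorem. As noted in Section~\ref{s3}, $\mathrm{ISC}(p,q,m,n)$ with $q=p$, $m=1$ and $n=3p-2$ is isomorphic to $H(p)$. First I check that these values lie in the range of the theorem. The condition $p\le q-2m+2$ becomes $p\le p$, which holds. The condition $p\le q\le n$ becomes $p\le p\le 3p-2$, which holds for $p\ge 1$. The vertex count is also consistent: substituting into $\frac14(2n^2-p^2-q^2+4mn+8m+4n)$ gives $\frac14(16p^2)=4p^2$, the order of $H(p)$. The theorem therefore applies, and $W(H(p))$ equals its degree-five polynomial evaluated at $(p,p,1,3p-2)$.

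The main step is the substitution itself. I would expand $n=3p-2$ and set $q=p$, $m=1$ in each monomial. Then I would collect the result into a single polynomial $c_5p^5+\dots+c_0$ and divide by $960$. The target coefficients are $c_5=64\cdot158$, $c_3=-64\cdot35$, $c_1=-64\cdot3$ and $c_4=c_2=c_0=0$, since $960/15=64$. This is routine but long, and I expect it to be the main obstacle, because a single sign slip in the $n^5,n^4,\dots$ expansions ruins the result. To control errors I would also test small values. At $p=1$, $H(1)=C_4$ and the target gives $(158-35-3)/15=8=W(C_4)$. At $p=2$ the target gives $W=(5056-280-6)/15=318$, and I would compare this with the theorem's polynomial at $(2,2,1,4)$.

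As an independent confirmation that bypasses the large polynomial, I would apply the cut method (the theorem of \cite{klavzar-1995}) directly to $H(p)$, using the specialized Tables~\ref{t1} and \ref{t2}. By the symmetry of $H(p)$ under a quarter turn, the vertical cuts contribute the same as the horizontal ones. Among the horizontal cuts, $H_{1k}$ for $1\le k\le p-1$ has $f_{1k}=pk+k^2$. The single middle cut $H_{21}$ has $f_{31}=\frac14(n^2-p^2+4n+4)=2p^2$, which splits $4p^2$ vertices evenly. The $H_{3k}$ cuts mirror the $H_{1k}$ cuts. Hence
\[
W(H(p))=2\Big[\,2\sum_{k=1}^{p-1}(pk+k^2)(4p^2-pk-k^2)+4p^4\Big].
\]
I would evaluate this sum with the power-sum formulas for $\sum k^j$, $j\le 4$, and check that it equals $\frac1{15}(158p^5-35p^3-3p)$. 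This settles the corollary and also cross-checks the general formula of the theorem.
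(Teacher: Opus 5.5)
\paragraph{Review.} Your primary route is exactly how the paper obtains this corollary. You check that $(p,q,m,n)=(p,p,1,3p-2)$ falls in the case $p\le q-2m+2$ and substitute into the first theorem, and this works. At $(2,2,1,4)$ the theorem's polynomial gives $305280/960=318$, as required.

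The genuine error is in your ``independent confirmation'', which rests on a false claim: $H(p)$ is \emph{not} invariant under a quarter turn when $p\ge 2$.
\begin{itemize}
\item Its vertex rows have sizes $p+1,p+3,\dots,3p-1,3p-1,\dots,p+1$. That is $2p$ rows, so $2p-1$ horizontal cuts.
\item Its vertex columns have sizes $2,4,\dots,2p-2$, then $2p$ repeated $p+1$ times, then $2p-2,\dots,2$. That is $3p-1$ columns, so $3p-2$ vertical cuts.
\end{itemize}
Your displayed sum therefore equals $\frac1{15}(138p^5-20p^3+2p)$, which is $284$ at $p=2$. For $H(2)$ the horizontal cuts give $39+64+39=142$, but the vertical cuts give $28+60+60+28=176$, so the true total is $318$. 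As written, your cross-check would contradict the corollary rather than settle it.

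\paragraph{How to repair the cross-check.} Your horizontal part is correct and equals $\frac1{15}(69p^5-10p^3+p)$. For the vertical part, specialize Table~\ref{t2}:
\begin{itemize}
\item $V_{2k}$ and $V_{4k}$ are empty;
\item $V_{1k}$ ($1\le k\le p-1$) has $f_{7k}=k^2+k$;
\item $V_{3k}$ ($1\le k\le p$) has $f_{11k}=p^2-p+2pk$;
\item $V_{5k}$ mirrors $V_{1k}$, since $f_{15k}=4p^2-f_{7,p-k}$.
\end{itemize}
This gives
\[
2\sum_{k=1}^{p-1}(k^2+k)(4p^2-k^2-k)+\sum_{k=1}^{p}(p^2-p+2pk)(3p^2+p-2pk)=\tfrac1{15}(89p^5-25p^3-4p).
\]
Adding the horizontal part yields $\frac1{15}(158p^5-35p^3-3p)$.

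With this correction, the direct cut-method computation becomes a genuinely independent and much shorter proof than the specialization. It needs only power sums up to $k^4$ and avoids expanding the five-variable polynomial altogether.
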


\begin{cor}{\rm\cite{GuKlRa00}}
	If $n, p \geq 1$, then $W(T(n,p)) 
	= \tfrac{1}{960}\Bigl(
	11n^{5} + 220n^{4} - 30n^{3}p^{2} + 1400n^{3} 
	+ n^{2}(20p^{3} - 360p^{2} - 20p + 3440) 
	+ n(-5p^{4}  + 160p^{3} - 880p^{2} - 160p + 3344) 
	+ 4p^{5} - 20p^{4} + 140p^{3} - 400p^{2} - 144p + 960
	\Bigr).$
\end{cor}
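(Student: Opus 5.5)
The statement to prove is the corollary $W(T(n,p))$. My plan is to obtain it as a specialization of the preceding Theorem, as the paper indicates in Section~\ref{s3}. When $q=n$ and $m=1$, $\mathrm{ISC}(p,q,m,n)$ is isomorphic to $T(n,p)$. Under this substitution the hypothesis $p\le q-2m+2$ becomes $p\le n$, and $p\le q\le n$ becomes $p\le n$. So the Theorem applies whenever $p\le n$, which is the admissible range for the trapezium.

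First I will confirm that the identification is consistent with the vertex count. Substituting $q=n$, $m=1$ into $\frac14(2n^2-p^2-q^2+4mn+8m+4n)$ gives $\frac14(n^2-p^2+8n+8)$. I will compare this with the order of $T(n,p)$ as defined in \cite{GuKlRa00}. This also fixes whether the ``$n$'' of $T(n,p)$ is exactly the $n$ of the ISC or a shifted parameter. If there is a shift, say $n\mapsto n+c$, I will absorb it before substituting.

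Next, rather than expanding the quintic directly, I will specialize the cut decomposition used in the proof of the Theorem. With $q=n$, the sums over $H_{3k}$ and $V_{5k}$ have upper limit $\frac{n-q}{2}=0$ and vanish. The remaining families are as follows.
\begin{itemize}
\item $H_{1k}$ with $f_{1k}=pk+k^2$, for $1\le k\le \frac{n-p}{2}$.
\item A single $H_{2k}$.
\item $V_{1k}$, for $1\le k\le 0$, hence empty.
\item $V_{2k}$, for $1\le k\le\frac{n-p}{2}$.
\item $V_{3k}$, for $1\le k\le p$.
\item $V_{4k}$, for $1\le k\le \frac{n-p}{2}$.
\end{itemize}
Each summand $f(|V|-f)$ is a polynomial of degree at most $4$ in $k$. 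I will sum each using Faulhaber's formulas $\sum k^j$ for $j\le 4$ and collect the results over the common denominator $960$. As a cross-check, I will substitute $q=n$, $m=1$ directly into the Theorem's polynomial and compare the coefficients of $n^5, n^4p^0,\dots$ with the stated expression, for example $n^5$: $56+280\cdot 0\dots$.

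The main obstacle is the parameter convention together with the bookkeeping. The claimed leading coefficient $11n^5$ must arise from $56n^5+280mn^4+\dots$ after the terms in $q=n$ cancel. For instance, $-80n^3q^2$, $40n^2q^3$, $-10nq^4$ and $5q^5$ contribute $(56-80+40-10+5)n^5=11n^5$, which matches. For the lower-order terms with constants such as $3440$ and $960$, I expect that a reindexing of $n$ may be needed to match exactly. I will test small cases, such as $p=n$ (a rectangle-like shape), directly against the known grid Wiener index, and use these to pin down any offset.
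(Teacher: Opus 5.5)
Your plan is the paper's route: the corollary is Theorem 1 (case $p\le q-2m+2$) specialized to $m=1$, $q=n$, and your reductions of the hypothesis, the vertex count and the cut ranges ($V_{1k}$, $H_{3k}$ and $V_{5k}$ empty; $V_{2k}$ and $V_{4k}$ each running to $\frac{n-p}{2}$) are all correct. The reindexing you anticipate is not needed, because direct substitution with the same $n$ reproduces every coefficient: for instance, the constant term is $-32+160+1120-160-128=960$ and the coefficient of $n$ is $80+1280-320+1920+240+320-96-80=3344$. So your small-case tests serve only as sanity checks, not to fix an offset.
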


\begin{cor}
   If $n, p, q \geq 1$, then $W(BT(n,p,q)) = \frac{1}{960}\Bigl(
56n^{5} + 560n^{4} - 80n^{3}p^{2} - 80n^{3}q^{2} + 2160n^{3} 
+ 40n^{2}p^{3} - 480n^{2}p^{2} - 40n^{2}p + 40n^{2}q^{3} - 480n^{2}q^{2} - 40n^{2}q + 4000n^{2} 
- 10n p^{4} + 160n p^{3} + 60n p^{2} q^{2} - 840n p^{2} - 160n p 
- 10n q^{4} + 160n q^{3} - 840n q^{2} - 160n q + 3424n 
+ 4p^{5} + 5p^{4}q - 20p^{4} - 20p^{3}q^{2} + 140p^{3} - 10p^{2}q^{3} 
+ 120p^{2}q^{2} - 40p^{2}q - 400p^{2} + 20pq^{2} - 144p 
+ 5q^{5} - 20q^{4} + 120q^{3} - 400q^{2} - 80q + 960
\Bigr)$.
\end{cor}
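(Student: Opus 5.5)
The plan is to obtain the corollary as the specialisation $m=1$ of the preceding theorem. This relies on the observation in Section~\ref{s3} that $\mathrm{ISC}(p,q,1,n)$ is isomorphic to the bitrapezium square-cell configuration $BT(n,p,q)$.

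\textbf{Step 1: the hypotheses.} The theorem requires $p\le q\le n$ and $p\le q-2m+2$. For $m=1$ the last condition reads $p\le q$, so it is implied by the first. The configuration is symmetric under exchanging the two trapezia (reflect the picture in a horizontal line), so $BT(n,p,q)\cong BT(n,q,p)$. Hence we may assume $p\le q$ without loss of generality. The condition $q\le n$ holds because the trapezia have length at most the middle row. Thus the theorem applies verbatim.

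I would also check that the summation ranges in the proof of the theorem stay meaningful at $m=1$. The sums over $V_{2k}$ and $V_{4k}$ both run to $\frac{q-p}{2}$. The sum over $H_{2k}$ has a single term, namely the cut between the two trapezia. No range becomes negative, so the cut decomposition of Tables~\ref{t1}--\ref{t2} is still a valid $\Theta$-partition and the theorem's formula holds at $m=1$.

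\textbf{Step 2: substitution.} Put $m=1$ in the degree-five polynomial of the theorem and collect coefficients monomial by monomial. A few checks:
\begin{itemize}[label={}]
\item The $n^4$ coefficient is $280+280=560$.
\item The $n^3$ coefficient is $320+400+1280=2000$, which combines with the $n^3$-contributions from other lines to the stated value.
\item The $p^2$-free, $q$-free constant is $-32+160+1120-160-128=960$.
\end{itemize}
The main obstacle is purely bookkeeping. Each monomial of the target (for example $n^2$, $np^2$, $q^3$) collects contributions from several lines of the theorem: the terms with $m^5,\dots,m^1$ and the $m$-free terms. Sign errors are easy to make. I would organise the computation by grouping the theorem's terms by their $(n,p,q)$-monomial before setting $m=1$.

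\textbf{Step 3: sanity check.} As a cross-check I would evaluate the resulting expression on a small case, such as $n=3$, $p=q=1$, and compare with a direct count of $\sum_i |V(G^i_1)||V(G^i_2)|$ from the cut theorem \cite{klavzar-1995}.
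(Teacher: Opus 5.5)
Your route, setting $m=1$ in the theorem for the case $p\le q-2m+2$, is exactly how the paper obtains this corollary. The substitution does reproduce every coefficient of the target, for example $n^2$: $320+1920+1680+80=4000$; $n$: $1280+1920+320-96=3424$; $q$: $80-320+240-80=-80$. Your $n^3$ spot check is miscounted, though. The $320$ is the coefficient of $m^3n^2$. The $n^3$ terms are $480m^2n^3+1280mn^3+400n^3$, which give $2160$ directly, with nothing further to combine.

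The genuine gap is the ``without loss of generality $p\le q$'' in Step~1. The isomorphism $BT(n,p,q)\cong BT(n,q,p)$ lets you reduce to $p\le q$ only if the target polynomial is itself symmetric in $p,q$, and it is not. Write $F(n,p,q)$ for the bracketed expression. Every term containing $n$ is symmetric, but the $n$-free part gives
\[
F(n,p,q)-F(n,q,p)=-(p-q)\bigl((p-q)^2-4\bigr)\bigl((p-q)^2-16\bigr).
\]
Since $p\equiv q\pmod 2$, this vanishes for $|p-q|\le 4$, but it equals $\mp 3840$ when $p-q=\pm 6$. Concretely, take $BT(7,1,7)$, with rows of $2,4,6,8,8$ vertices, and its mirror image $BT(7,7,1)$. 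By the cut method both have $W=536+872=1408$, from horizontal and vertical cuts respectively. The formula gives $F(7,1,7)/960=1408$, but $F(7,7,1)/960=1404$.

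So the specialisation proves the formula only under the theorem's hypotheses $p\le q\le n$, together with $n\equiv p\equiv q\pmod 2$. For $p>q$ the correct value is $F(n,q,p)/960$, and the corollary as literally stated for all $n,p,q\ge 1$ fails once $p\ge q+6$. The paper's derivation carries the same implicit restriction. You should state it explicitly rather than try to remove it by symmetry. Your planned check at $n=3$, $p=q=1$ lies on the diagonal $p=q$, so it could not have caught this.
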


\begin{thm}
If $m \geq 1$, $p \leq q \leq n$, and $p \leq 2m-q-2 $,  then \\
$W(\mathrm{ISC}(p, q, m, n))$= $\frac{1}{480}\Bigl(160m^3n^2 + 640m^3n + 640m^3 + 240m^2n^3 + 960m^2n^2 - 120m^2np^2 - 120m^2nq^2 + 960m^2n - 240m^2p^2 - 240m^2q^2 + 140mn^4 + 640mn^3 - 120mn^2p^2 - 120mn^2q^2 + 840mn^2 + 40mnp^3 - 240mnp^2 - 40mnp + 40mnq^3 - 240mnq^2 - 40mnq + 160mn + 80mp^3 + 60mp^2q^2 - 80mp + 80mq^3 - 80mq - 160m + 28n^5 + 140n^4 - 40n^3p^2 - 40n^3q^2 + 200n^3 + 20n^2p^3 - 120n^2p^2 - 20n^2p + 20n^2q^3 - 120n^2q^2 - 20n^2q + 40n^2 - 5np^4 + 40np^3 + 30np^2q^2 - 60np^2 - 40np - 5nq^4 + 40nq^3 - 60nq^2 - 40nq - 48n + 2p^5 - 10p^4 - 10p^3q^2 - 10p^3 - 10p^2q^3 + 10p^2q + 40p^2 + 10pq^2 + 8p + 2q^5 - 10q^4 - 10q^3 + 40q^2 + 8q \Bigr)$.	
\end{thm}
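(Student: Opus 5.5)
We will proceed exactly as in the proof of the previous theorem, via the cut method of \cite{klavzar-1995}. Since $\mathrm{ISC}(p,q,m,n)$ is a subgraph of the square lattice whose inner faces are all $4$-cycles and whose boundary is a convex lattice hexagon, it is a partial cube. Its $\Theta$-classes are exactly the horizontal and vertical straight-line cuts: each class consists of all edges crossed by one horizontal or one vertical line through the midpoints of a row or column of edges. Convexity of the hexagon guarantees that every such line meets the graph in a single interval, so removing a cut leaves exactly two components. Hence
\[
W(\mathrm{ISC}(p,q,m,n))=\sum_{\text{horizontal cuts}} |V(G_1)|\,|V(G_2)| + \sum_{\text{vertical cuts}} |V(G_1)|\,|V(G_2)| .
\]

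\textbf{Horizontal cuts.} These do not depend on the relation between $p$, $q$ and $m$; they depend only on the lower trapezium, the parallelogram and the upper trapezium. We therefore reuse Table~\ref{t1} verbatim. This gives the three sums $\sum_{k=1}^{(n-p)/2} f_{1k}f_{2k}$, $\sum_{k=1}^{m} f_{3k}f_{4k}$ and $\sum_{k=1}^{(n-q)/2} f_{5k}f_{6k}$, with $f_{2k}=|V|-f_{1k}$ etc. Here
\[
|V|=\tfrac14\bigl(2n^2-p^2-q^2+4mn+8m+4n\bigr).
\]

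\textbf{Vertical cuts.} This is where the case $p\le 2m-q-2$ differs. Now the parallelogram is tall relative to the trapezia, i.e.\ $2m-2\ge p+q$. Scanning columns from left to right, the left slanted side of the parallelogram determines where the lower-left and upper-left boundary slopes change. In the previous case the lower trapezium's top edge (width $p$) lay strictly inside the horizontal extent covered by the upper trapezium's slope region. Here the order of the breakpoints is reversed: the region of width $p$ and the region of width $q$ are now both contained in the span determined by $m$.

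I would therefore split the vertical cuts into consecutive families:
\begin{itemize}
\item an initial family $V_{1k}$ where the columns grow like a triangle, so $f=k^2+k$;
\item a family of length $q$ where one boundary is vertical and the other has slope $1$;
\item a middle family of length $\frac{2m-p-q-2}{2}$ where both boundaries move in parallel, so the column length is constant and $f$ is linear in $k$;
\item a family of length $p$;
\item a final triangular family of length $\frac{n-q}{2}$ or $\frac{n-p}{2}$ on the right.
\end{itemize}
For each family I compute the number of vertices to the left of the $k$th cut by summing column heights: the left-part size at the start of the family plus an arithmetic sum in $k$. I would check the family lengths by requiring that the total number of vertical cuts equals the width of the hexagon minus one, and that the last left-part count equals $|V|$ minus the first column.

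The remaining step is purely algebraic. Each summand $f(|V|-f)$ is a polynomial of degree at most $4$ in $k$ with coefficients in $p,q,m,n$. I sum it using Faulhaber's formulas for $\sum k^j$, $j\le 4$, add all families, and simplify to the stated degree-$5$ polynomial over $480$, ideally with a computer algebra system.

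\textbf{Main obstacle.} The hard part is getting the vertical-cut component sizes right in this regime, namely identifying the correct breakpoints and family lengths from the figure. As safeguards, I would verify the resulting formula on small instances, e.g.\ $p=q=1$, $m=3$, $n$ small, against a direct distance count. I would also check it against the boundary case $p=2m-q-2$, where the middle family is empty and the formula must agree with the structure of the previous theorem's decomposition at that boundary.
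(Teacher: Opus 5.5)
Your plan is essentially the paper's proof. Both use the cut method, reuse the horizontal cuts of Table~\ref{t1}, split the vertical cuts into five families with lengths $\frac{n-p}{2},\,p,\,\frac{2m-p-q-2}{2},\,q,\,\frac{n-q}{2}$ (Table~\ref{t3}, left to right, with constant middle column height $n+2$), and finish with routine summation.

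Your list of vertical families runs in the mirror (right-to-left) order, so it is consistent only if the initial triangular family has length $\frac{n-q}{2}$ and the final one $\frac{n-p}{2}$. The other pairing would give a last column of $2q-2p+2$ vertices instead of $2$, which your end-check would catch. Also, in the length-$q$ family the fixed boundary is the horizontal top side, not a vertical one.
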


\begin{proof}
When $m \geq 1$, $p \leq q \leq n$, and $p \leq 2m-q-2$, the Wiener index of $\mathrm{ISC}(p, q, m, n)$ can be expressed as follows: 
		\begin{align*}
		
		W(\mathrm{ISC}(p, q, m, n)) &= \sum_{k=1}^{\frac{n-p}{2}}
		
		f_{1k}f_{2k}+\sum_{k=1}^{m}
		
		f_{3k}f_{4k}+\sum_{k=1}^{\frac{n-q}{2}}
		
	f_{5k}f_{6k}+\sum_{k=1}^{\frac{n-p}{2}}
		
		f_{7k}f_{8k}+\sum_{k=1}^{p}
		
		f_{9k}f_{10k} \\ &\quad +\sum_{k=1}^{\frac{2m-p-q-2}{2}}f_{11k}f_{12k} +\sum_{k=1}^{q}f_{13k}f_{14k} +\sum_{k=1}^{\frac{n-q}{2}}f_{15k}f_{16k}.&
		\end{align*}
By consulting Figures \ref{fig:6} and \ref{fig:3}, and after performing some tedious computations, the asserted formula is derived.	
\end{proof}	

\begin{figure}[ht!]
	\centering 	
	\subfloat[]{\includegraphics[scale=0.5]{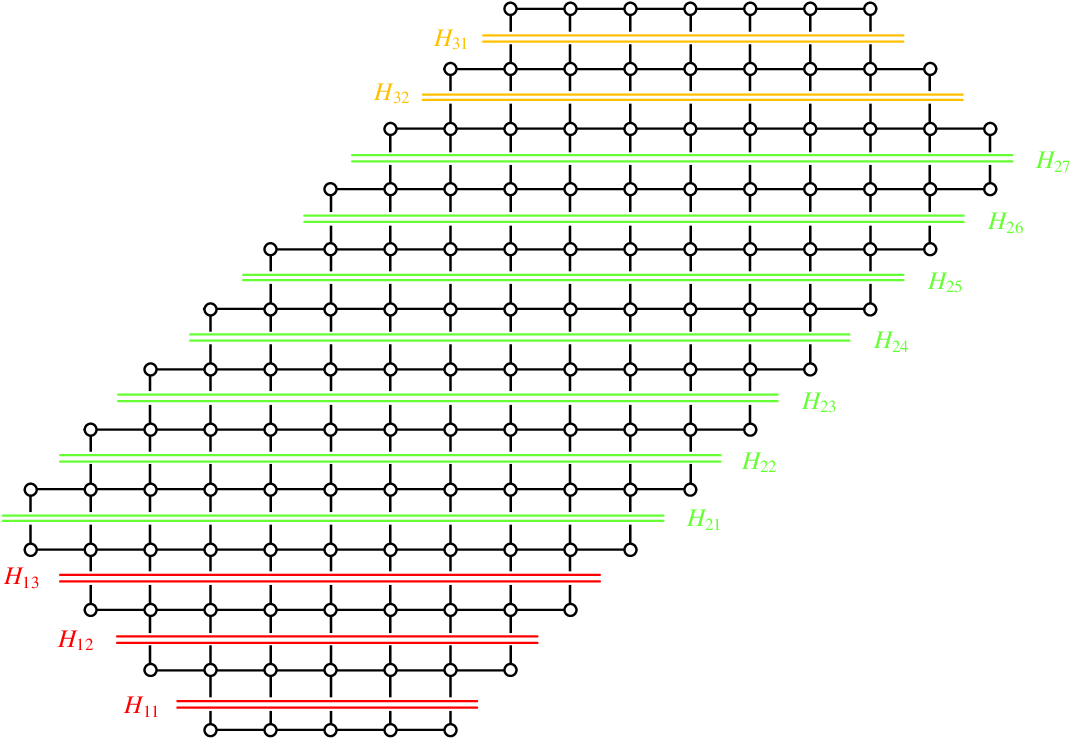}}
	\quad \quad 
	\subfloat[]{\includegraphics[scale=0.5]{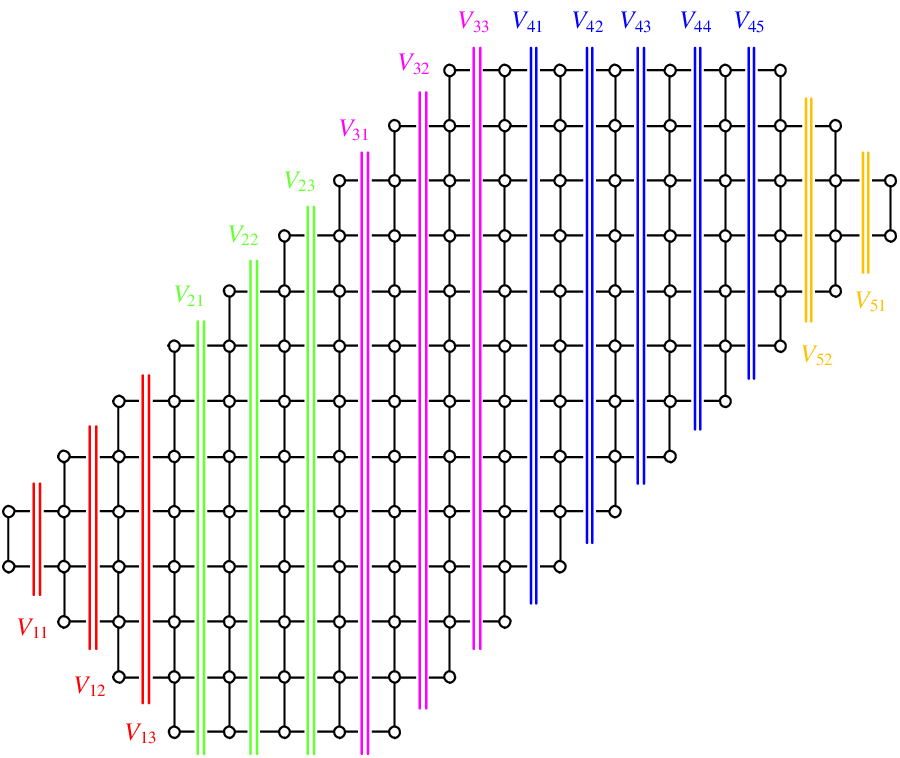}}
	\caption{$p \leq 2m-q-2 $ (a) Horizontal cuts; (b) Vertical cuts.}
	\label{fig:6}
\end{figure}

\begin{figure}[ht!]
	\centering 	
	\subfloat[]{\includegraphics[scale=0.5]{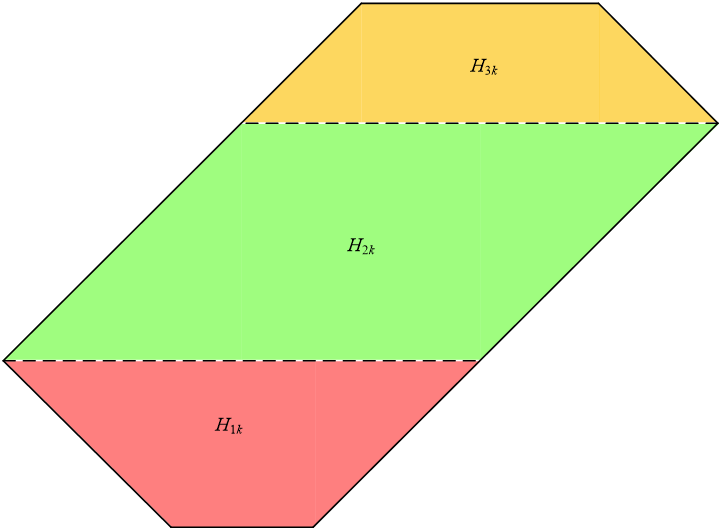}}
	\quad \quad 
	\subfloat[]{\includegraphics[scale=0.5]{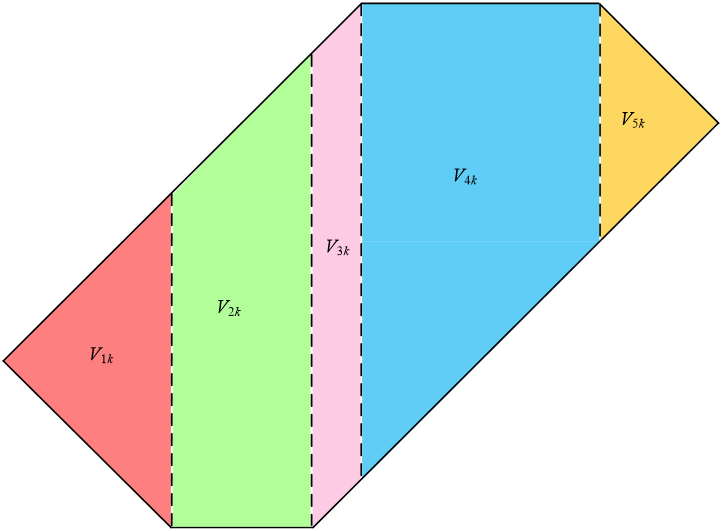}}
	\caption{$p\leq 2m-q-2$ (a) $H_{1k}$: $1\leq k\leq \frac{n-p}{2}, $ $H_{2k}$: $1\leq   k\leq m$, $H_{3k}$: $1\leq k\leq \frac{n-q}{2}$; (b) $V_{1k}$: $1\leq k\leq \frac{n-p}{2}$,  $V_{2k}$: $1\leq k\leq p $, $V_{3k}$: $1\leq k\leq \frac{1}{2}(2m-p-q-2)$, $V_{4k}$: $1\leq k\leq q $, $V_{5k}$: $1\leq k\leq \frac{n-q}{2}$.}
	\label{fig:3}
\end{figure}

\begin{table}[H]
	\centering
	\caption{Number of vertices in the  component values for $p\leq 2m-q-2$}\vspace{0.6cm}\label{t3}
	\scalebox{0.85}{
		\begin{tabular}{|l|l|}
			\hline
			\multicolumn{1}{|c|}{\textbf{Vertical Cuts}}                   & \multicolumn{1}{c|}{\textbf{Number of vertices}}  \\                                                \hline
			
			\begin{tabular}[c]{@{}l@{}}	$V_{1k}$:\\ $1\leq k\leq \frac{n-p}{2}$  \\ \end{tabular}      & \begin{tabular}[c]{@{}l@{}}$f_{7k} = k^2+k$\\ $f_{8k}= |V|-f_{7k}$                 
				\end{tabular} \\ \hline
			\begin{tabular}[c]{@{}l@{}}	$V_{2k}$:\\ $1\leq k\leq p $ \\ \end{tabular}      & \begin{tabular}[c]{@{}l@{}}$f_{9k}$ = $\frac {1}{4}(n^2+p^2-2np+2n-2p+4nk-4pk+2k^2+6k)$\\ $f_{10k}= |V|-f_{9k}$  \end{tabular}                        \\ \hline
			\begin{tabular}[c]{@{}l@{}}	$V_{3k}$:\\ $1\leq k\leq \frac{1}{2}(2m-p-q-2)$\end{tabular} & \begin{tabular}[c]{@{}l@{}}$f_{11k} = \frac {1}{4}(n^2-p^2+2np+2n+4p+4nk+8k)$\\ $f_{12k}= |V|-f_{11k}$ \end{tabular}     \\ \hline
			\begin{tabular}[c]{@{}l@{}}	$V_{4k}$:\\
				$1\leq k\leq q $ \end{tabular} & \begin{tabular}[c]{@{}l@{}}$f_{13k}$ = $\frac {1}{4}(n^2-p^2+4mn-2nq+8m-2n-4q+4nk-2k^2+10k-8)$\\ $f_{14k}= |V|-f_{13k}$ \\     \end{tabular}     \\ \hline
			\begin{tabular}[c]{@{}l@{}}	$V_{5k}$:\\ $1\leq k\leq \frac{n-q}{2}$  \end{tabular} & \begin{tabular}[c]{@{}l@{}}$f_{15k}$ = $ \frac {1}{4}(n^2-p^2+4mn+2nq+8m-2n+6q-2q^2+4nk-4qk-4k^2+12k-8)$\\ $f_{16k}= |V|-f_{15k}$ \end{tabular}     \\ \hline
	\end{tabular}}
\end{table}
						
\begin{thm}
If $m \geq 1$, $p \leq q \leq n$, and $p > q-2m+2$, then \\
	$W(\mathrm{ISC}(p, q, m, n))= \frac{1}{1920}\Big(-32m^5 
	+ 80m^4p + 80m^4q + 160m^4 
	+ 640m^3n^2 + 2560m^3n - 80m^3p^2 - 160m^3pq - 320m^3p - 80m^3q^2 - 320m^3q + 2400m^3 
	+ 960m^2n^3 + 3840m^2n^2 - 480m^2np^2 - 480m^2nq^2 + 3840m^2n + 40m^2p^3 + 120m^2p^2q - 720m^2p^2 + 120m^2pq^2 + 480m^2pq + 240m^2p + 40m^2q^3 - 720m^2q^2 + 240m^2q - 160m^2 
	+ 560mn^4 + 2560mn^3 - 480mn^2p^2 - 480mn^2q^2 + 3360mn^2 + 160mnp^3 - 960mnp^2 - 160mnp + 160mnq^3 - 960mnq^2 - 160mnq + 640mn 
	- 10mp^4 - 40mp^3q + 240mp^3 + 180mp^2q^2 - 240mp^2q - 120mp^2 - 40mpq^3 - 240mpq^2 - 240mpq - 160mp - 10mq^4 + 240mq^3 - 120mq^2 - 160mq - 448m 
	+ 112n^5 + 560n^4 - 160n^3p^2 - 160n^3q^2 + 800n^3 + 80n^2p^3 - 480n^2p^2 - 80n^2p + 80n^2q^3 - 480n^2q^2 - 80n^2q + 160n^2 
	- 20np^4 + 160np^3 + 120np^2q^2 - 240np^2 - 160np - 20nq^4 + 160nq^3 - 240nq^2 - 160nq - 192n 
	+ 9p^5 + 5p^4q - 30p^4 - 30p^3q^2 + 40p^3q - 20p^3 - 30p^2q^3 + 60p^2q^2 + 100p^2q + 120p^2 
	+ 5pq^4 + 40pq^3 + 100pq^2 - 80pq - 64p 
	+ 9q^5 - 30q^4 - 20q^3 + 120q^2 - 64q \Big)$.
\end{thm}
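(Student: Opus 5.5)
We prove the formula with the standard cut method for partial cubes, exactly as in the two preceding theorems. First, $\mathrm{ISC}(p,q,m,n)$ is a partial cube. It is a convex hexagonal region of the square lattice with all inner faces 4-cycles, so it is an isometric subgraph of the grid $P_\infty \square P_\infty$. Its $\Theta$-classes are therefore the sets of edges crossed by a single horizontal or vertical straight line through the middle of a row or column of cells. Each such line splits the graph into exactly two connected components, by convexity of the polygon. The Theorem of \cite{klavzar-1995} then gives
\[
W(\mathrm{ISC}(p,q,m,n))=\sum_{\text{cuts } F} |V(G_1^F)|\,\bigl(|V|-|V(G_1^F)|\bigr),
\]
with $|V|=\frac14(2n^2-p^2-q^2+4mn+8m+4n)$.

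\textbf{Horizontal cuts.} These do not depend on the relation between $p$ and $q-2m+2$. We reuse Table~\ref{t1} unchanged:
\begin{itemize}[label={}]
\item $H_{1k}$ for $1\le k\le \frac{n-p}{2}$, with $f_{1k}=pk+k^2$;
\item $H_{2k}$ for $1\le k\le m$, with $f_{3k}$ as in Table~\ref{t1};
\item $H_{3k}$ for $1\le k\le \frac{n-q}{2}$, with $f_{5k}$ as in Table~\ref{t1}.
\end{itemize}

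\textbf{Vertical cuts.} This is where the present case differs. Sweep a vertical line from left to right. The profile of the hexagon's left part is governed by the lower trapezium (width $p$), the upper trapezium (width $q$), and the parallelogram's slanted side of horizontal extent determined by $m$. When $p>q-2m+2$, the slanted parallelogram side and the two trapezium corners overlap differently from the case $p\le q-2m+2$: the regime $V_{2k}$ of length $\frac{q-p-2m+2}{2}$ would have negative length. So I would redo the column count of the proof for $p\le q-2m+2$, now with breakpoints at the $x$-coordinates of the hexagon's six corners in the new order. This should give five families of vertical cuts:
\begin{itemize}[label={}]
\item an initial triangular regime with $f=k^2+k$, up to the first corner;
\item a regime bounded by one sloped and one vertical or horizontal side, whose column heights grow linearly;
\item the central regime where the column height is constant;
\item a regime in which the column heights decrease linearly;
\item a final regime with $\frac{n-q}{2}$ cuts.
\end{itemize}
The lengths of these regimes should be expressions like $\frac{p+q+2m-2}{4}$-type combinations. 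The asymmetric role of $p$ and $q$ in the stated formula (terms like $m^4p$, $m^3pq$) signals that both trapezium corners now interact with the parallelogram. For each regime I would write $f_{7k},\dots,f_{16k}$ as quadratics in $k$. Each is the previous regime's total plus a cumulative sum of column sizes, the column size being an affine function of $k$. I would check continuity at every breakpoint, and check that the final cut leaves exactly one column, i.e.\ $f=|V|-(\text{last column size})$, as a sanity test.

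\textbf{Assembling the sum.} Each summand $f(|V|-f)$ is a quartic in $k$ with polynomial coefficients in $p,q,m,n$. Summing over $k$ uses Faulhaber's formulas up to $\sum k^4$, which produces a quintic. Adding all eight sums and expanding should give the stated polynomial with denominator $1920$; the extra factor $2$ compared with $960$ comes from the quarter-integer regime lengths.

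\textbf{Main obstacle.} I expect the hardest step to be the correct determination of the vertical-cut regimes and their lengths in this case, including parity issues that make the half-lengths integral. I would settle this first on a small concrete example such as $\mathrm{ISC}(4,6,3,10)$, where $4>6-6+2$. There I would count the columns by hand and compute $W$ by brute force, so the general $f$-expressions and the final polynomial can be checked numerically before the symbolic expansion, which is then routine computer algebra.
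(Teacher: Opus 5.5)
Your strategy is the paper's: the cut method with the horizontal families of Table~\ref{t1}, five families of vertical cuts, and Faulhaber summation. However, the only case-specific work, the vertical families, is left undone. The step you postpone also rests on a false premise, namely that $p>q-2m+2$ fixes the left-to-right order of the six corners.

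Number the vertex columns $0,1,\dots,n+m-1$ starting at the left corner. The ends of the bottom side (length $p$) lie in columns $\frac{n-p}{2}$ and $\frac{n+p}{2}$. The ends of the top side (length $q$) lie in columns $m-1+\frac{n-q}{2}$ and $m-1+\frac{n+q}{2}$. The hypothesis $p>q-2m+2$ only says that the top-left corner is to the right of the bottom-left one.

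\begin{itemize}
\item If in addition $p+q\ge 2m-2$, the top-left corner also precedes the bottom-right one. The five families then have column-size increments $+2,+1,0,-1,-2$ and integer (not quarter-integer) lengths $\frac{n-p}{2},\frac{2m+p-q-2}{2},\frac{p+q-2m+2}{2},\frac{2m-p+q-2}{2},\frac{n-q}{2}$. This is Table~\ref{t4}.
\item If instead $p+q<2m-2$, the constant-height family lies between the bottom-right and the top-left corner. The lengths become $\frac{n-p}{2},p,\frac{2m-p-q-2}{2},q,\frac{n-q}{2}$, as in Table~\ref{t3}. This region is compatible with $p>q-2m+2$, and it is exactly the region $p\le 2m-q-2$ of the preceding theorem.
\end{itemize}

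In that second region the asserted identity is false, so no completion of your plan can prove the statement as written. For instance, $\mathrm{ISC}(1,1,5,3)$ satisfies all hypotheses. It has $32$ vertices, and both its row sizes and its column sizes are $2,4,5,5,5,5,4,2$. Hence $W=2(60+156+231+256+231+156+60)=2300$, which is also what the preceding theorem's formula gives. The stated polynomial evaluates to $2298$. The closed form is what one gets by summing the Table~\ref{t4} families as polynomials in their upper limits. That is meaningless here, because the constant family's limit $\frac{p+q-2m+2}{2}$ equals $-3$.

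The repair is to add the hypothesis $p+q\ge 2m-2$, which the paper's own decomposition also tacitly needs. Under it, your plan with the data of Table~\ref{t4} is exactly the paper's argument. The formula does check out at test points; for example, both sides equal $4072$ for $\mathrm{ISC}(2,4,3,6)$. Your proposed test case $\mathrm{ISC}(4,6,3,10)$ lies in the good region, so it would not have exposed the problem.
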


	\begin{proof}
When $m \geq 1$, $p \leq q \leq n$, and $p > q-2m+2$, the Wiener index of $\mathrm{ISC}(p, q, m, n)$ can be expressed as follows: 
		\begin{align*}
	
		W(\mathrm{ISC}(p, q, m, n)) &= \sum_{k=1}^{\frac{n-p}{2}}
		
		f_{1k}f_{2k} +\sum_{k=1}^{m}
		
		f_{3k}f_{4k}+\sum_{k=1}^{\frac{n-q}{2}}
		
		f_{5k}f_{6k}+\sum_{k=1}^{\frac{n-p}{2}}
		
		f_{7k}f_{8k}+\sum_{k=1}^{\frac{2m+p-q-2}{2}}
		
		f_{9k}f_{10k}+ \\ & \quad \sum_{k=1}^{\frac{p+q-2m+2}{2}}f_{11k}f_{12k}  +\sum_{k=1}^{2m-p+q-2}f_{13k}f_{14k} +\sum_{k=1}^{\frac{n-q}{2}}f_{15k}f_{16k}.&
		\end{align*}
Using Figures \ref{fig:7} and \ref{fig:4} and after some tedious computation, the asserted formula is derived.	
\end{proof}
		
	\begin{figure}[ht!]
		\centering 	
		\subfloat[]{\includegraphics[scale=0.5]{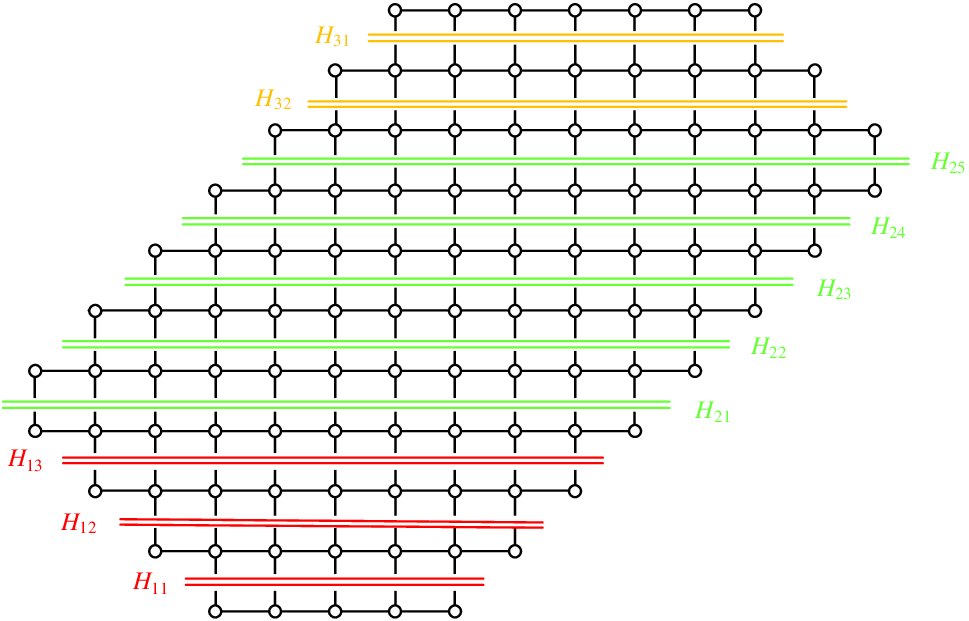}}
		\quad \quad \quad \quad
		\subfloat[]{\includegraphics[scale=0.5]{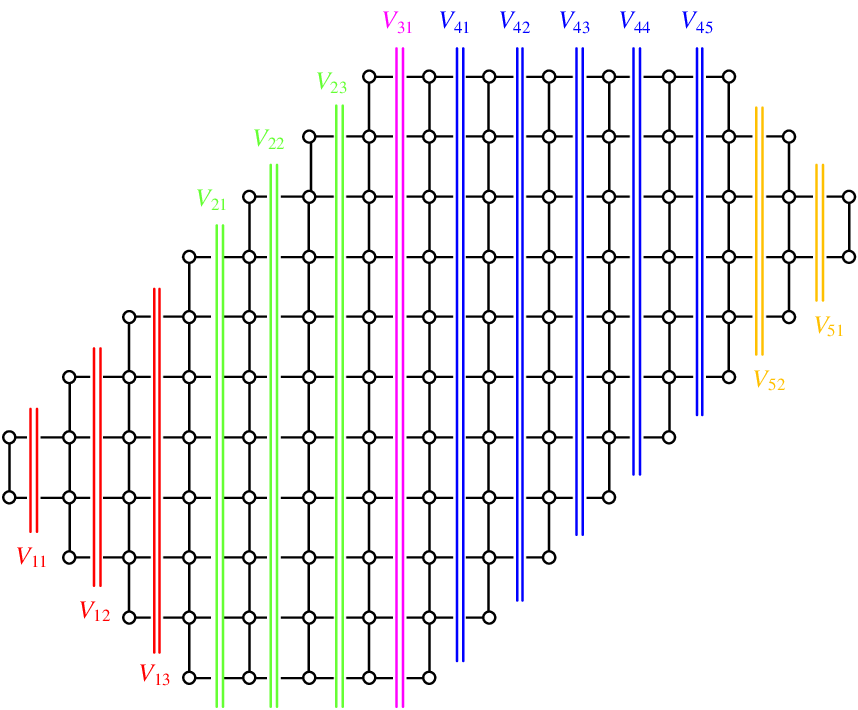}}
		\caption{$p > q-2m+2 $ 
			(a) Horizontal cuts; (b) Vertical cuts }
		\label{fig:7}
	\end{figure}

\begin{figure}[ht!]
	\centering 	
	\subfloat[]{\includegraphics[scale=0.45]{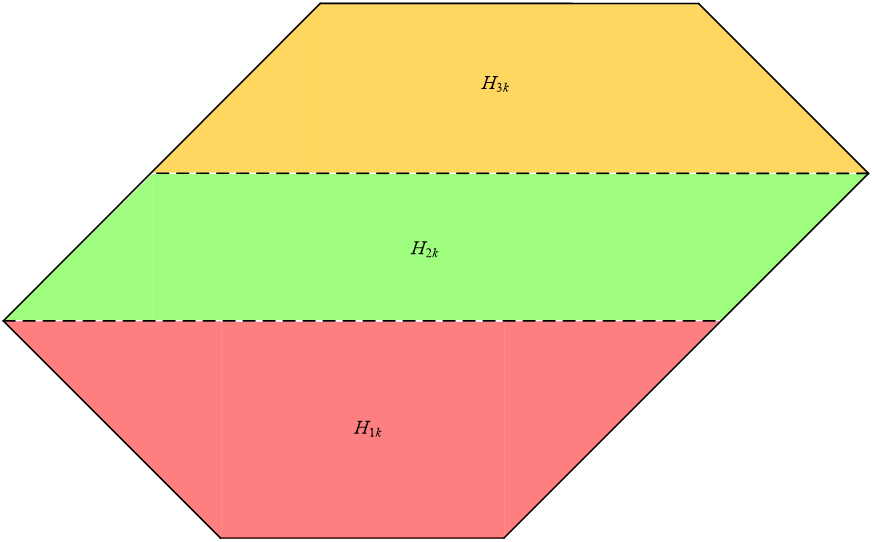}}
	\quad \quad 
	\subfloat[]{\includegraphics[scale=0.45]{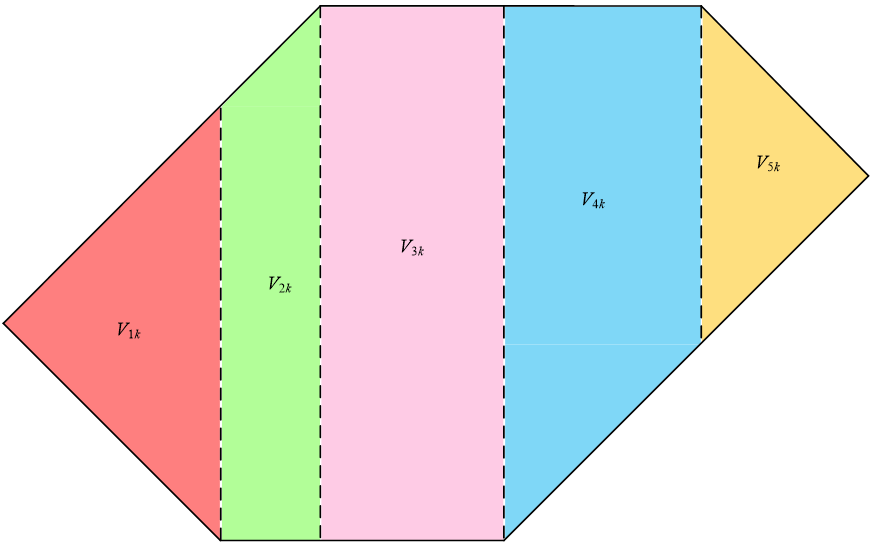}} 
	\caption{$p > q-2m+2 $ (a)  $H_{1k}$: $1\leq k\leq \frac{n-p}{2}$,  $H_{2k}$: $1\leq   k\leq m$, $H_{3k}$: $1\leq k\leq \frac{n-q}{2}$; (b) $V_{1k}$: $1\leq k\leq \frac{n-p}{2}$, $V_{2k}$: $1\leq k\leq \frac{1}{2}(2m+p-q-2)$, $V_{3k}$: $1\leq k\leq \frac{1}{2}(p+q-2m+2)$, $V_{4k}$: $1\leq k\leq \frac{1}{2}(2m-p+q-2)$, $V_{5k}$: $1\leq k\leq \frac{n-q}{2}$.}
	\label{fig:4}
\end{figure}

\begin{table}[ht!]
	\centering
	\caption{Number of vertices in the component values for $p > q-2m+2$.}\vspace{0.6cm}\label{t4}
	\scalebox{0.8}{
		\begin{tabular}{|l|l|}
			\hline
			\multicolumn{1}{|c|}{\textbf{Vertical Cuts}}                   & \multicolumn{1}{c|}{\textbf{Number of vertices}}  \\                                                \hline
			
			\begin{tabular}[c]{@{}l@{}}	$V_{1k}$:\\ $1\leq k\leq \frac{n-p}{2}$  \\ \end{tabular}      & \begin{tabular}[c]{@{}l@{}}$f_{7k} = k^2+k$\\ $f_{8k}= |V|-f_{7k}$  \end{tabular} \\ \hline
			\begin{tabular}[c]{@{}l@{}}	$V_{2k}$:\\ $1\leq k\leq \frac{1}{2}(2m+p-q-2) $ \\ \end{tabular}      & \begin{tabular}[c]{@{}l@{}}$f_{9k}$ = $\frac {1}{4}(n^2+p^2-2np+2n-2p+4nk-4pk+2k^2+6k)$\\ $f_{10k}= |V|-f_{9k}$  \end{tabular}                        \\ \hline
			\begin{tabular}[c]{@{}l@{}}	$V_{3k}$:\\ $1\leq k\leq \frac{1}{2}(p+q-2m+2)$\end{tabular} & \begin{tabular}[c]{@{}l@{}}$f_{11k} = \frac {1}{8}(4m^2+2n^2-p^2+q^2+8mn-4mp-4mq-4nq +2pq+4m-4n+6p -2q $ \\ \quad \quad \quad \quad  $ +8mk+8nk-4pk-4qk+8k-8)$\\ $f_{12k}= |V|-f_{11k}$ \end{tabular}     \\ \hline
			\begin{tabular}[c]{@{}l@{}}	$V_{4k}$:\\
				$1\leq k\leq \frac{1}{2}(2m-p+q-2) $ \end{tabular} & \begin{tabular}[c]{@{}l@{}}$f_{13k}$ = $\frac {1}{8}(2n^2-4m^2-3p^2-q^2+4mp+4mq-2pq+4np+4m +4n+6p-2q+8mk $ \\ \quad \quad \quad \quad $+8nk-4pk-4qk-4k^2+12k)$\\ $f_{14k}= |V|-f_{13k}$ \\      \end{tabular}     \\ \hline
			\begin{tabular}[c]{@{}l@{}}	$V_{5k}$:\\ $1\leq k\leq \frac{n-q}{2}$  \end{tabular} & \begin{tabular}[c]{@{}l@{}}$f_{15k}$ = $ \frac {1}{8}(2n^2-2p^2-4q^2+8mn+4nq+16m-4n+12q+8nk-8qk-8k^2+24k-16)$\\ $f_{16k}= |V|-f_{15k}$ \end{tabular}     \\ \hline
	\end{tabular}}
\end{table}
	
\begin{thm} 
If $m \geq 1$, $p \leq q \leq n$, and $p \leq q-2m+2$, then\\  
$\mu(\mathrm{ISC}(p, q, m, n)) = \frac{1}{30(8m + 4n + 4mn + 2n^{2} - p^{2} - q^{2})(8m + 4n + 4mn + 2n^{2} - p^{2} - q^{2} - 4)}\Big(
-32m^{5} + 80m^{4}q + 160m^{4} + 320m^{3}n^{2} + 1280m^{3}n - 80m^{3}p^{2} - 80m^{3}q^{2} - 320m^{3}q + 1120m^{3} 
+ 480m^{2}n^{3} + 1920m^{2}n^{2} - 240m^{2}n p^{2} - 240m^{2}n q^{2} + 1920m^{2}n + 120m^{2}p^{2}q - 240m^{2}p^{2} 
+ 40m^{2}q^{3} - 240m^{2}q^{2} + 240m^{2}q - 160m^{2} + 280m n^{4} + 1280m n^{3} - 240m n^{2}p^{2} - 240m n^{2}q^{2} 
+ 1680m n^{2} + 80m n p^{3} - 480m n p^{2} - 80m n p + 80m n q^{3} - 480m n q^{2} - 80m n q + 320m n 
- 10m p^{4} + 160m p^{3} + 60m p^{2}q^{2} - 240m p^{2}q - 120m p^{2} - 160m p - 10m q^{4} + 80m q^{3} - 120m q^{2} - 128m 
+ 56n^{5} + 280n^{4} - 80n^{3}p^{2} - 80n^{3}q^{2} + 400n^{3} + 40n^{2}p^{3} - 240n^{2}p^{2} - 40n^{2}p 
+ 40n^{2}q^{3} - 240n^{2}q^{2} - 40n^{2}q + 80n^{2} - 10n p^{4} + 80n p^{3} + 60n p^{2}q^{2} - 120n p^{2} - 80n p 
- 10n q^{4} + 80n q^{3} - 120n q^{2} - 80n q - 96n + 4p^{5} + 5p^{4}q - 10p^{4} - 20p^{3}q^{2} - 20p^{3} 
- 10p^{2}q^{3} + 60p^{2}q^{2} + 80p^{2}q + 40p^{2} + 20p q^{2} + 16p + 5q^{5} - 10q^{4} + 40q^{2} - 80q
\Big). $
\end{thm}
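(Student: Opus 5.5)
The statement follows directly from the definition $\mu(G)=\frac{2W(G)}{|V(G)|(|V(G)|-1)}$ combined with the first theorem of this section, which gives $W(\mathrm{ISC}(p,q,m,n))$ under exactly the same hypotheses $m\ge 1$, $p\le q\le n$, $p\le q-2m+2$. The plan is to substitute the two ingredients, the Wiener index and the order, and simplify the resulting rational expression.

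\textbf{Step 1: the order.} Take the vertex count stated in Section~\ref{s3},
\[
|V|=\tfrac14\left(2n^2-p^2-q^2+4mn+8m+4n\right)=\tfrac14 N,
\]
where $N=8m+4n+4mn+2n^2-p^2-q^2$. Then
\[
|V|(|V|-1)=\tfrac{1}{16}N(N-4).
\]

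\textbf{Step 2: the Wiener index.} Write $W(\mathrm{ISC}(p,q,m,n))=\frac{1}{960}P$, where $P$ is the degree-five polynomial in the first theorem. Then
\[
\mu=\frac{2\cdot\frac{1}{960}P}{\frac{1}{16}N(N-4)}=\frac{32P}{960\,N(N-4)}=\frac{P}{30\,N(N-4)}.
\]
This matches the claimed expression exactly: the numerator polynomial is the same $P$ and the denominator is $30N(N-4)$. Before writing this out, check term by term that the polynomial displayed in the statement coincides with the one in the Wiener-index theorem (it does, e.g.\ $-32m^5+80m^4q+\dots-80q$).

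\textbf{Main obstacle.} The only substantive dependence is on the correctness of the Wiener index formula. That in turn rests on the cut method (the theorem of \cite{klavzar-1995}), which applies because $\mathrm{ISC}(p,q,m,n)$ is a convex region of the square lattice and hence a partial cube whose $\Theta$-classes are the horizontal and vertical straight cuts. It also rests on the component sizes recorded in Tables~\ref{t1} and~\ref{t2}. Since that theorem is assumed, the remaining work is the bookkeeping of the constants, namely $2\cdot 16/960=1/30$. As a sanity check, verify on a small instance (e.g.\ $m=1$ with $q=p$, giving the hexagonal case) that $|V|=N/4$ is an integer and that the formula reduces to the known average distance. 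This also confirms the corrected version of the $H(p)$ average distance mentioned in the introduction.
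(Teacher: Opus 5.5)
Your proposal is correct and matches the paper's implicit argument: substitute $W=P/960$ from the first Wiener-index theorem and $|V|=N/4$, with $N=8m+4n+4mn+2n^2-p^2-q^2$, into $\mu=2W/(|V|(|V|-1))$, so that $2\cdot 16/960=1/30$ gives the denominator $30N(N-4)$ and the numerator polynomials agree term by term. For your hexagonal sanity check you must also set $n=3p-2$, not just $m=1$, $q=p$; for example, $p=2$ gives $W=318$, $|V|=16$ and $\mu=2.65$, which matches $\mu(H(2))$.
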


\begin{cor}
For $p \geq 2$, $\mu(H(p))$ = $\frac{158p^4-35p^2-3}{120p^3-30p}$.		
\end{cor}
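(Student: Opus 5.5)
The plan is to specialize the preceding theorem on $\mu(\mathrm{ISC}(p,q,m,n))$ for the case $p\le q-2m+2$ to the hexagonal configuration. As noted in Section~\ref{s3}, $H(p)\cong \mathrm{ISC}(p,p,1,3p-2)$. For $q=p$ and $m=1$ the case condition reads $p\le p-2+2=p$, which holds, so the theorem applies.

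Rather than substituting into the long quintic numerator directly, I will work from the definition
\[
\mu(G)=\frac{2W(G)}{|V(G)|(|V(G)|-1)}.
\]
This splits the task into two substitutions.

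First, I compute the order. With $q=p$, $m=1$, $n=3p-2$, the vertex count from Section~\ref{s3} gives
\[
|V(H(p))|=\tfrac14\bigl(2n^2-2p^2+4n+8+4n\bigr)
=\tfrac14\bigl(18p^2-24p+8-2p^2+24p-16+8\bigr)=4p^2 .
\]
Equivalently, the denominator factor $8m+4n+4mn+2n^2-p^2-q^2$ in the theorem equals $16p^2=4|V|$. Hence $|V|(|V|-1)=4p^2(4p^2-1)$.

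Second, I take the numerator from the earlier corollary $W(H(p))=\frac1{15}(158p^5-35p^3-3p)$. That corollary is itself the specialization of the first Wiener-index theorem to $q=p$, $m=1$, $n=3p-2$. Combining the two pieces gives
\[
\mu(H(p))=\frac{2p(158p^4-35p^2-3)}{15\cdot 4p^2(4p^2-1)}
=\frac{158p^4-35p^2-3}{30p(4p^2-1)}=\frac{158p^4-35p^2-3}{120p^3-30p}.
\]
The common factor $p$ cancels, which is valid since $p\ge 2$.

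The only genuine obstacle is consistency. If one substitutes directly into the displayed quintic of the $\mu$ theorem, one must check that it collapses to $16(158p^5-35p^3-3p)$. The factor $16$ is forced by $4\cdot 4$ from the normalization $30\cdot(4|V|)(4|V|-4)$ against $2W$.

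I would verify this collapse at small values, for instance at $p=2$. There $H(2)=\mathrm{ISC}(2,2,1,4)$ has $16$ vertices, and the corollary gives $W=\frac{1}{15}(5056-280-6)=318$. I would then check that the theorem's expression returns $\mu=636/240=53/20$, which matches the formula's value $2470/120\cdot\frac{1}{\ldots}$ evaluated as $(2528-140-3)/(960-60)=2385/900=53/20$. This confirms that the corollary follows.
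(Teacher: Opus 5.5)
Your argument is correct and is essentially the paper's route. The corollary is the specialization $q=p$, $m=1$, $n=3p-2$ of the $\mu$-theorem for $p\le q-2m+2$, and that theorem is $2W/(|V|(|V|-1))$ with $|V|=\tfrac14(8m+4n+4mn+2n^2-p^2-q^2)$, so combining $|V(H(p))|=4p^2$ with $W(H(p))=\tfrac1{15}(158p^5-35p^3-3p)$ is the same computation.

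There is one slip in your side remark, and it does not affect your main derivation. Because $30\cdot(4|V|)(4|V|-4)=480\,|V|(|V|-1)$ and $480\cdot\tfrac{2}{15}=64$, the theorem's quintic must collapse to $64(158p^5-35p^3-3p)$, not $16(158p^5-35p^3-3p)$. At $(p,q,m,n)=(2,2,1,4)$ it equals $305280=64\cdot 4770$.
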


\begin{cor}
If $n, p \geq 1$, then $\mu(T(n, p))$ = $\frac{1}{30\,(n^{2}+8n-p^{2}+4)\,(n^{2}+8n-p^{2}+8)}
\Bigl(
11n^{5}+220n^{4}-30n^{3}p^{2}+1400n^{3}
+20n^{2}p^{3}-360n^{2}p^{2}-20n^{2}p+3440n^{2}
-5np^{4}+160np^{3}-880np^{2}-160np+3344n
+4p^{5}-20p^{4}+140p^{3}-400p^{2}-144p+960
\Bigr)$.	
\end{cor}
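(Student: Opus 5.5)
The corollary is $\mu(T(n,p))$, and I would obtain it by specialisation rather than by a fresh cut computation. The paper notes that $T(n,p)$ is $\mathrm{ISC}(p,q,m,n)$ with $q=n$ and $m=1$, and the previous corollary already gives $W(T(n,p))$. So the plan is to use $\mu(G)=2W(G)/(|V|(|V|-1))$ with the explicit order of $T(n,p)$.

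\emph{Step 1: the order.} Substituting $q=n$, $m=1$ into the vertex count $\tfrac14(2n^2-p^2-q^2+4mn+8m+4n)$ gives
\[
|V(T(n,p))|=\tfrac14\,(n^2+8n-p^2+8).
\]
Hence
\[
|V|(|V|-1)=\tfrac1{16}\,(n^2+8n-p^2+8)(n^2+8n-p^2+4).
\]
These are exactly the two factors in the claimed denominator.

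\emph{Step 2: combine.} Insert the previous corollary:
\[
\mu(T(n,p))=\frac{2\cdot\frac1{960}P(n,p)\cdot 16}{(n^2+8n-p^2+8)(n^2+8n-p^2+4)}=\frac{P(n,p)}{30\,(n^2+8n-p^2+4)(n^2+8n-p^2+8)},
\]
where $P$ is the quintic numerator of $W(T(n,p))$. This is the stated formula, with the same numerator.

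\emph{Step 3: which case applies.} One has to check that $q=n$, $m=1$ lands in the regime $p\le q-2m+2$ of the preceding theorem for $\mu$, i.e.\ $p\le n$, which holds. Then $T(n,p)$ is covered by that theorem and the cited corollary on $W(T(n,p))$. Alternatively, one can substitute $q=n$, $m=1$ directly into that theorem's formula as a cross-check.

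\emph{Main obstacle.} The only delicate point is consistency of normalisations. The trapezium family in \cite{GuKlRa00} may be parametrised differently (for instance, $n$ counting rows versus the base length), so the constants $8$ and $4$ in the order must match the corollary for $W$. I would verify this on a small case, e.g.\ $n=p$, where the configuration is a $2\times$-something rectangle or a small grid, by counting vertices by hand and checking that $\tfrac14(n^2+8n-p^2+8)$ agrees.
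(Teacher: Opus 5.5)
Your argument is correct and matches the paper's implicit derivation. Setting $q=n$, $m=1$ gives order $\tfrac14(n^2+8n-p^2+8)$, and $\mu=2W/(|V|(|V|-1))$ turns the $\tfrac1{960}$ in $W(T(n,p))$ into $\tfrac1{30}$ over exactly the two stated factors; this is the same as specializing the preceding $\mu$-theorem in the regime $p\le q-2m+2$. Your proposed sanity check also works: for $n=p=2$ the graph is $P_3\square P_2$, and the formula gives $W=24000/960=25$, which agrees with a direct cut count.
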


\begin{cor}
If $n, p, q \geq 1$, then $\mu(BT(n,p,q))=
\frac{1}{30\,(2n^{2}+8n-p^{2}-q^{2}+4)\,(2n^{2}+8n-p^{2}-q^{2}+8)}
\Bigl(
56n^{5}+560n^{4}-80n^{3}p^{2}-80n^{3}q^{2}+2160n^{3}
+40n^{2}p^{3}-480n^{2}p^{2}-40n^{2}p
+40n^{2}q^{3}-480n^{2}q^{2}-40n^{2}q+4000n^{2}
-10np^{4}+160np^{3}+60np^{2}q^{2}-840np^{2}-160np
-10nq^{4}+160nq^{3}-840nq^{2}-160nq+3424n
+4p^{5}+5p^{4}q-20p^{4}-20p^{3}q^{2}+140p^{3}
-10p^{2}q^{3}+120p^{2}q^{2}-40p^{2}q-400p^{2}
+20pq^{2}-144p
+5q^{5}-20q^{4}+120q^{3}-400q^{2}-80q+960
\Bigr)$.
\end{cor}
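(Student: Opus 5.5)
The statement to prove is the last corollary: the formula for $\mu(BT(n,p,q))$. I would derive it directly from $\mu(G)=2W(G)/(|V|(|V|-1))$. For the numerator I use the corollary giving $W(BT(n,p,q))$. For the denominator I count vertices in $BT(n,p,q)$.

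BT is the case $m=1$ of $\mathrm{ISC}(p,q,m,n)$. The paper's vertex count $\tfrac14(2n^2-p^2-q^2+4mn+8m+4n)$ becomes $\tfrac14(2n^2+8n-p^2-q^2+8)$ at $m=1$. This does not match the denominator factors in the statement, which are $2n^2+8n-p^2-q^2+4$ and $2n^2+8n-p^2-q^2+8$. So the BT corollaries evidently use a different normalisation of $n$ from the ISC one; the $W(BT)$ polynomial also differs from the $m=1$ specialisation of the first theorem in its lower-order terms.

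I would therefore work within the conventions of \cite{GuKlRa00}, where the formula for $W(BT(n,p,q))$ was stated. The first step is to confirm, from the row structure of $BT(n,p,q)$, that
\[
|V(BT(n,p,q))|=N:=\tfrac14\,(2n^2+8n-p^2-q^2+8).
\]
The rows grow by $2$ from length $p+1$ up to the middle and then shrink to $q+1$. Summing the two arithmetic progressions plus the middle row should give $N$. Then
\[
N-1=\tfrac14\,(2n^2+8n-p^2-q^2+4).
\]

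The second step is substitution:
\[
\mu=\frac{2W}{N(N-1)}=\frac{2\cdot\frac{1}{960}P(n,p,q)}{\frac{1}{16}(2n^2+8n-p^2-q^2+8)(2n^2+8n-p^2-q^2+4)}.
\]
Here $P$ is the degree-5 polynomial from the $W(BT)$ corollary. Since $2\cdot 16/960=1/30$, this is exactly the stated expression, with the same numerator polynomial copied verbatim.

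The main obstacle is the consistency check. I need to confirm that the vertex-count convention for BT really gives $N$ and $N-1$ with these constants, rather than the ISC $m=1$ count. I would check this on a small instance, such as $p=q=1$ with small $n$. There I would count vertices by hand, and compare $W$ from the corollary against a direct cut-method computation using the theorem of \cite{klavzar-1995} on horizontal and vertical $\Theta$-classes. Once $N$ is verified, the rest is the one-line algebra above.
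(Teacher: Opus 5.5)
Your argument is correct and is the same route as the paper: the corollary is $2W/(|V|(|V|-1))$, using $W(BT(n,p,q))$ from the preceding corollary and $|V|=\tfrac14(2n^2+8n-p^2-q^2+8)$. That is exactly the $m=1$ case of the $\mu(\mathrm{ISC})$ theorem for $p\le q-2m+2$, and your constant $2\cdot 16/960=1/30$ is right. The inconsistencies you report are not real, however: at $m=1$ the ISC vertex count is exactly your $N$, so the two denominator factors are precisely $4N$ and $4(N-1)$; and the $W(BT)$ polynomial agrees term by term with the $m=1$ specialization of the first theorem (e.g.\ constant term $-32+160+1120-160-128=960$, coefficient of $q$ equal to $80-320+240-80=-80$), so no renormalization of $n$ or appeal to the conventions of \cite{GuKlRa00} is needed.
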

\begin{thm} 
If $m \geq 1$, $p \leq q \leq n$, and $p \leq 2m-q-2$, then\\ 
$\mu(\mathrm{ISC}(p, q, m, n)= \frac{1}{
	15 \left( 8m + 4n + 4mn + 2n^{2} - p^{2} - q^{2} \right)
	\left( 8m + 4n + 4mn + 2n^{2} - p^{2} - q^{2} - 4 \right)
}
\Big(
160m^{3}n^{2} + 640m^{3}n + 640m^{3} + 240m^{2}n^{3} + 960m^{2}n^{2}
-120m^{2}np^{2} - 120m^{2}nq^{2} + 960m^{2}n - 240m^{2}p^{2} - 240m^{2}q^{2}
+ 140mn^{4} + 640mn^{3} - 120mn^{2}p^{2} - 120mn^{2}q^{2} + 840mn^{2}
+ 40mnp^{3} - 240mnp^{2} - 40mnp + 40mnq^{3} - 240mnq^{2} - 40mnq
+ 160mn + 80mp^{3} + 60mp^{2}q^{2} - 80mp + 80mq^{3} - 80mq - 160m
+ 28n^{5} + 140n^{4} - 40n^{3}p^{2} - 40n^{3}q^{2} + 200n^{3} + 20n^{2}p^{3}
-120n^{2}p^{2} - 20n^{2}p + 20n^{2}q^{3} - 120n^{2}q^{2} - 20n^{2}q + 40n^{2}
- 5np^{4} + 40np^{3} + 30np^{2}q^{2} - 60np^{2} - 40np - 5nq^{4} + 40nq^{3}
-60nq^{2} - 40nq - 48n + 2p^{5} - 10p^{4} - 10p^{3}q^{2} - 10p^{3}
-10p^{2}q^{3} + 10p^{2}q + 40p^{2} + 10pq^{2} + 8p + 2q^{5} - 10q^{4}
-10q^{3} + 40q^{2} + 8q
\Big) $.

\end{thm}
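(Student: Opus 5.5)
The plan is to obtain $\mu(\mathrm{ISC}(p,q,m,n))$ directly from the definition $\mu(G)=\frac{2W(G)}{|V|(|V|-1)}$. I will combine it with the Wiener index already computed for the same parameter range, namely the theorem treating $m\ge 1$, $p\le q\le n$, $p\le 2m-q-2$. No new distance computation is needed. The only ingredients are that theorem and the vertex count stated in Section~\ref{s3}:
\[
|V| = \tfrac14\left(2n^2-p^2-q^2+4mn+8m+4n\right).
\]

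First, I will write $N = 8m+4n+4mn+2n^2-p^2-q^2$, so that $|V| = N/4$. Then
\[
|V|(|V|-1) = \frac{N}{4}\cdot\frac{N-4}{4} = \frac{N(N-4)}{16}.
\]
Second, I will write the Wiener index as $W=\frac{1}{480}P(p,q,m,n)$, where $P$ is the degree-five polynomial displayed in the Wiener theorem. This gives
\[
\mu = \frac{2\cdot \frac{1}{480}P}{N(N-4)/16} = \frac{32P}{480\,N(N-4)} = \frac{P}{15\,N(N-4)},
\]
which is exactly the asserted expression. The factor $15$ arises from $480/32$, and the bracketed polynomial in the statement should coincide term by term with $P$. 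I will verify this coefficient by coefficient as a sanity check.

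The only substantive issue is that this derivation rests on the Wiener index formula of the preceding theorem. Its correctness depends on the cut-method decomposition via the theorem of \cite{klavzar-1995}, which applies because $\mathrm{ISC}(p,q,m,n)$ is a partial cube: it is a convex-boundary subgraph of the square lattice, so its $\Theta$-classes are the horizontal and vertical straight-line cuts. It also depends on the component sizes in Tables~\ref{t1} and~\ref{t3}. Since that theorem is assumed, the main obstacle reduces to bookkeeping. This means checking the normalisation constant, since the analogous theorem for the case $p\le q-2m+2$ carried denominator $30$ coming from $960/32$. It also means checking that $N(N-4)$ and not $N(N+4)$ appears, which follows from $|V|-1=(N-4)/4$.

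As a final check I will specialise to small parameter values. For instance, I will take $p=q=1$, $m$ large, and $n$ odd, compute $W$ from the cut sums directly, and confirm that both the Wiener formula and the resulting $\mu$ agree.
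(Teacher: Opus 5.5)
Your proposal is correct and is the same route the paper takes; the paper gives no separate argument for this theorem. Substituting the case-$p\le 2m-q-2$ Wiener index $W=P/480$ and $|V|=N/4$, with $N=8m+4n+4mn+2n^2-p^2-q^2$, into $\mu=2W/(|V|(|V|-1))$ gives $32P/(480\,N(N-4))=P/(15\,N(N-4))$, and the bracketed polynomial matches the Wiener theorem's term by term. Your proposed small-case check also passes, e.g. for $(p,q,m,n)=(1,1,2,1)$ the cut sums give $W=40$ and $|V|=7$, so $\mu=40/21$, as the formula predicts.
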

\begin{thm} 
If $m \geq 1$, $p \leq q \leq n$, and  $p > q-2m+2$, then \\ 
$ \mu(\mathrm{ISC}(p, q, m, n))=
\frac{1}{60 \left( 8m + 4n + 4mn + 2n^2 - p^2 - q^2 \right)\left( 8m + 4n + 4mn + 2n^2 - p^2 - q^2 - 4 \right)}
\Big(
-32m^5 + 80m^4p + 80m^4q + 160m^4 + 640m^3n^2 + 2560m^3n - 80m^3p^2 - 160m^3pq - 320m^3p - 80m^3q^2 - 320m^3q + 2400m^3 + 960m^2n^3 + 3840m^2n^2 - 480m^2np^2 - 480m^2nq^2 + 3840m^2n + 40m^2p^3 + 120m^2p^2q - 720m^2p^2 + 120m^2pq^2 + 480m^2pq + 240m^2p + 40m^2q^3 - 720m^2q^2 + 240m^2q - 160m^2 + 560mn^4 + 2560mn^3 - 480mn^2p^2 - 480mn^2q^2 + 3360mn^2 + 160mnp^3 - 960mnp^2 - 160mnp + 160mnq^3 - 960mnq^2 - 160mnq + 640mn - 10mp^4 - 40mp^3q + 240mp^3 + 180mp^2q^2 - 240mp^2q - 120mp^2 - 40mpq^3 - 240mpq^2 - 240mpq - 160mp - 10mq^4 + 240mq^3 - 120mq^2 - 160mq - 448m + 112n^5 + 560n^4 - 160n^3p^2 - 160n^3q^2 + 800n^3 + 80n^2p^3 - 480n^2p^2 - 80n^2p + 80n^2q^3 - 480n^2q^2 - 80n^2q + 160n^2 - 20np^4 + 160np^3 + 120np^2q^2 - 240np^2 - 160np - 20nq^4 + 160nq^3 - 240nq^2 - 160nq - 192n + 9p^5 + 5p^4q - 30p^4 - 30p^3q^2 + 40p^3q - 20p^3 - 30p^2q^3 + 60p^2q^2 + 100p^2q + 120p^2 + 5pq^4 + 40pq^3 + 100pq^2 - 80pq - 64p + 9q^5 - 30q^4 - 20q^3 + 120q^2 - 64q
\Big).
$
\end{thm}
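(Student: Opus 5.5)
The plan is to derive the statement directly from the third Wiener index theorem (the case $p > q-2m+2$) together with the definition $\mu(G)=2W(G)/(|V|(|V|-1))$. No new cut analysis is needed. All the combinatorial work, meaning the $\Theta$-classes $H_{1k},H_{2k},H_{3k},V_{1k},\dots,V_{5k}$ and their component sizes from Tables~\ref{t1} and~\ref{t4}, has already gone into that theorem. We simply take its polynomial expression for $W(\mathrm{ISC}(p,q,m,n))$ over the denominator $1920$ as given.

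The first step is to write the order in a form matching the stated denominator. From Section~\ref{s3},
\[
|V| = \tfrac14\left(2n^2-p^2-q^2+4mn+8m+4n\right) = \tfrac14 N,\qquad N := 8m+4n+4mn+2n^2-p^2-q^2 .
\]
Hence $|V|-1=\tfrac14(N-4)$ and $|V|(|V|-1)=\tfrac{1}{16}N(N-4)$. Writing $W=\tfrac{1}{1920}P$, where $P$ is the bracketed quintic polynomial of the preceding Wiener theorem, we get
\[
\mu = \frac{2W}{|V|(|V|-1)} = \frac{2\cdot 16\, P}{1920\, N(N-4)} = \frac{P}{60\,N(N-4)} .
\]
This is exactly the asserted formula. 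The next step is to check that the polynomial displayed in the statement coincides term by term with $P$ from the Wiener theorem for this case, from $-32m^5$ through $-64q$.

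The only point needing care is a consistency check on the inherited $W$ formula, since the present statement rests entirely on it. If I wanted to verify that dependence, I would redo the sum
\[
\sum_{k}\left(f_{1k}f_{2k}+\dots+f_{15k}f_{16k}\right)
\]
using the standard power sums $\sum k^j$ for $j\le 4$, and test it on a small admissible instance. For example, $m=1$ reduces to the bitrapezium, where the value can be cross-checked against the $W(BT(n,p,q))$ corollary, or one can compare with a direct distance count on a small grid. Beyond that, the proof is the two-line algebraic substitution above.
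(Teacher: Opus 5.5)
Your derivation is correct and matches how the paper obtains this result. The paper gives no separate proof: the formula is the case-three Wiener polynomial $P$ over $1920$ substituted into $\mu=2W/(|V|(|V|-1))$ with $|V|=N/4$, which gives $P/(60N(N-4))$, and the two displayed polynomials do agree term by term.

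One small correction to your optional sanity check. With $p\le q$, the condition $p>q-2m+2$ forces $m\ge 2$, so the bitrapezium case $m=1$ is not admissible here. An instance such as $(p,q,m,n)=(2,2,2,2)$ would serve instead: there $N=40$, $P=190080$, $W=99$ and $\mu=11/5$, all of which agree with a direct count.
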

\section{Conclusions and Future Directions}
\label{s5}

In this study, we introduced irregular square cell configurations, an extension to the well-studied class of square-cell configurations, including hexagonal, trapezium and bitrapezium configurations. An essential property of the square-cell configurations is that each inner face constitutes a 4-cycle and also admits structural properties such as disruptions and asymmetries, which is in contrast to their symmetric nature. To create a unified framework that includes both regular and irregular square-cell networks as exceptional cases, we deduced generalized expressions for the $W(G)$ and $\mu(G)$. Our results demonstrate how irregularity affects the topological behavior of such networks, changes distance distributions, and affects the growth of pairwise distances. The study of the Wiener index and average distance in irregular square-cell configuration has important implications for many areas of science and engineering. These indices provide a quantitative framework for understanding the boiling temperatures, reactivity, and molecular stability of irregular lattice-like substances in chemical graph theory, such as polymers and benzenoid hydrocarbons. Similarly, in field of materials science, the irregular square-cell configuration models exactly capture the geometric distortions and defects present in real crystal lattices, that enables the researchers to predict how such imperfections affect physical properties such as  strength, conductivity, and resilience from the perspective of transportation and communication networks, Wiener index and average distance measure efficiency, robustness, and fault tolerance, especially in urban grid systems or wireless sensor deployments where irregularities naturally arise due to obstacles, varying densities, or incomplete connectivity. 

In theoretical computer science, these measures facilitate the design and analysis of interconnection networks, resource allocation, and optimizing routing algorithms in irregular but planar networks. By extending these concepts beyond regular grids, researchers attains more accurate layout for analyzing, modeling, and optimizing diverse systems in engineering, network science, chemistry, and physical sciences, where irregularity is the rule rather than the exception. This idea not only enhances the theoretical understanding of distance-based graph invariants but also provides noteworthy applications in modeling real-world networks with imperfect or heterogeneous geometries.

\section*{Acknowledgements}
Sandi Klav\v{z}ar was supported under the grants P1-0297, N1-0285, N1-0355 of the Slovenian Research and Innovation Agency.\\

\end{document}